\newcommand{\PO}{\mathop{PO}\nolimits}
\newcommand{\NE}{\operatorname{NE}}
\newcommand{\ch}{\operatorname{ch}}
\newcommand{\cYv}{{\Check{\cY}}}
\newcommand{\Mir}{\operatorname{Mir}}
\newcommand{\amb}{{\mathrm{amb}}}
\newcommand{\reg}{{\mathrm{reg}}}
\newcommand{\vc}{{\mathrm{vc}}}
\newcommand{\NS}{\operatorname{NS}}
\newcommand{\lambdabar}{\overline{\lambda}}
\newcommand{\Deltaul}{\underline{\Delta}}
\newcommand{\Aff}{\operatorname{Aff}}
\newcommand{\cFtilde}{\widetilde{\cF}}
\newcommand{\bsnablabar}{\overline{\bsnabla}}
\newcommand{\PD}{\operatorname{PD}}
\newcommand{\XFA}{X_{\cF(A)}}
\newcommand{\XFzero}{{X_{\cF(A_0)}}}
\newcommand{\XFone}{{X_{\cF(A_1)}}}
\newcommand{\XFtildeA}{X_{\cFtilde(A)}}
\newcommand{\XtildeFzero}{\Xtilde_{\cF(A_0)}}
\newcommand{\XtildeFone}{\Xtilde_{\cF(A_1)}}
\newcommand{\MSzero}{{\cM_{\Sigma_0}}}
\newcommand{\MSone}{{\cM_{\Sigma_1}}}
\newcommand{\bsNv}{\check{\bsN}}
\newcommand{\bsMv}{\check{\bsM}}
\newcommand{\HAZ}{H_{A,\bZ}}
\newcommand{\HBZ}{H_{B,\bZ}}
\newcommand{\Euler}{\operatorname{Euler}}
\newcommand{\SqMat}[4]{\begin{pmatrix}#1&#2\\#3&#4\end{pmatrix}}
\newcommand{\sqmat}[4]{\begin{pmatrix}#1&#2\\#3&#4\end{pmatrix}}
\newcommand{\cDv}{\check{\cD}}
\newcommand{\cDM}{\cD}
\newcommand{\GammaM}{\Gamma}
\newcommand{\fYv}{\check{\fY}}
\title{
Modular surfaces
associated with\\
toric K3 hypersurfaces}
\author{Kenji Hashimoto,
Atsuhira Nagano,
Kazushi Ueda}
\date{}
\begin{document}

\maketitle

\begin{abstract}
We give detailed descriptions of the period maps
of two 2-parameter families of anti-canonical hypersurfaces
in toric 3-folds.
One of them is related to a Hilbert modular surface,
and the other is related to the product of modular curves.
\end{abstract}


\section{Introduction}

Mirror symmetry is a mystesious relationship
between complex geometry and symplectic geometry
motivated by string thoery.
Although mirror symmetry is most intensively studied
for Calabi-Yau 3-folds,
mirror symmetry for other classes of varieties,
such as abelian varieties, Fano varieties,
or varieties of general type,
is also an interesting subject.

In this paper, we study the period maps
of two 2-parameter families of K3 surfaces
from the point of view of mirror symmetry.
Similar analysis for the 1-parameter family
of the quartic mirror K3 surfaces has been performed
by Hartmann \cite{MR3062592},
based on earlier results by
Nagura and Sugiyama \cite{Nagura-Sugiyama} and
Narumiya and Shiga \cite{Narumiya-Shiga}.

The first family
is mirror to anti-canonical hypersurfaces
in the $\bP^1$-bundle
$\bP(\cO_{\bP^2} \oplus \cO_{\bP^2}(2))$
over $\bP^2$.
The parameter space of this family
admits a natural compactification $\XFzero$
called the \emph{secondary stack}.
The Picard lattice and the transcendental lattice
of a very general member of this family are given by
\begin{align}
 M_0 &= E_8 \bot E_8 \bot
\begin{pmatrix}
 2 & 1 \\
 1 & -2
\end{pmatrix},
\qquad
 T_0 = U \bot
\begin{pmatrix}
 2 & 1 \\
 1 & -2
\end{pmatrix}.
\end{align}
The moduli space
$
 \cM_0
$
of $M_0$-polarized K3 surfaces
is the symmetric Hilbert modular surface
associated with $\bQ(\sqrt{5})$.

\begin{theorem} \label{th:main0}
The period map gives an isomorphism
\begin{align}
 \Pitilde : \XtildeFzero \simto \MSzero,
\end{align}
of Deligne-Mumford stacks,
where
$
 \XtildeFzero \to \XFzero
$
is a weighted blow-up of weight $(1,3)$ at one point
followed by the root construction along the discriminant, and
$\MSzero$ is a toroidal compactification of $\cM_0$
equipped with a natural orbifold structure.
\end{theorem}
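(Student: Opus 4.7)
The plan is to construct the period map on the open part of $\XFzero$, analyze its extension across the boundary and the discriminant, and then match the local stacky structure on both sides. As a first step, I would take the family of anti-canonical toric hypersurfaces over (a cover of) the complement of the boundary and the discriminant in $\XFzero$, equip it with an $M_0$-polarization using the fact that the Picard lattice of the very general fiber is $M_0$, and thereby produce a morphism from this open locus to $\cM_0$. Generic Torelli for $M_0$-polarized K3 surfaces combined with a dimension count shows that this morphism is birational, and inspection of the polarized Picard lattices of nearby fibers promotes it to an open immersion onto a dense open of $\cM_0$.

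Next, I would analyze both sides at infinity and along the discriminant. The secondary stack $\XFzero$ is toric, so its boundary divisors, the discriminant divisor, and the generic stabilizers of the toric strata are read off from the GKZ data of the Newton polytope of anti-canonical sections on $\bP(\cO_{\bP^2} \oplus \cO_{\bP^2}(2))$. On the other side, $\cM_0$ is the symmetric Hilbert modular surface for $\bQ(\sqrt 5)$, whose cusps and elliptic points are well documented, and $\MSzero$ is obtained by choosing a rational polyhedral fan at each cusp. The task is then to match the two pictures locally: the weighted blow-up of weight $(1,3)$ at one point should be forced by comparing the cusp fan of $\MSzero$ to the local toric chart of $\XFzero$ near the corresponding boundary point, where the two coincide only after the insertion of one additional ray; the root construction along the discriminant is then forced by the generic stabilizer of $M_0$-polarized K3 surfaces along the corresponding divisor of $\cM_0$.

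Finally, I would verify that $\Pitilde : \XtildeFzero \to \MSzero$ is an isomorphism of Deligne--Mumford stacks. This reduces to showing that $\Pitilde$ is bijective on coarse moduli spaces, which follows from global Torelli for $M_0$-polarized K3 surfaces together with surjectivity obtained by constructing an $M_0$-polarized K3 surface at every point of $\MSzero$, and that it induces isomorphisms on automorphism groups at every point, which can be done stratum by stratum by comparing the generic stabilizer on each stratum with the local monodromy of $R^2 f_* \bZ$ on a transverse punctured disc.

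The main obstacle I anticipate is the explicit local matching in the second step. The appearance of weight $(1,3)$ at precisely one point, and the order of the root construction along the discriminant, are subtle numerical features of the way the Hirzebruch cusp fan for $\bQ(\sqrt 5)$ sits inside the secondary fan of the mirror polytope, and are pinned down by explicit monodromy computations along each boundary stratum. Once these local matches are established, the global isomorphism should follow formally from Torelli and from the universal property of the root construction.
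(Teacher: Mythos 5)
There are two genuine gaps in your plan, both at the points where the paper's proof runs on explicit computation rather than general theory. First, your opening claim that ``generic Torelli for $M_0$-polarized K3 surfaces combined with a dimension count shows that this morphism is birational'' is not an argument. Torelli identifies the moduli space of ample $M_0$-polarized K3 surfaces with $\cDM^\circ/\GammaM$; it says nothing about the degree of the classifying map from the two-dimensional parameter space $\XFzero^\reg$ to that moduli space, and a dominant morphism of surfaces can have any degree, so ``inspection of the polarized Picard lattices of nearby fibers'' cannot promote dominance to an open immersion. Ruling out degree $>1$ is precisely the content of the explicit period computation the paper imports from \cite[Theorem 6.2]{MR2962398}: with respect to Hirzebruch's generators $\fA,\fB,\fC,\fD$ of the ring of symmetric Hilbert modular forms (so that $\cMbar_0$ is the root stack of $\bP(1,3,5)$ along $\bsDelta(\fA,\fB,\fC)$), the period map is
\begin{align*}
 (\lambda,\mu) \mapsto \ld 1 : \frac{25\mu}{2(\lambda-1/4)^3} : -\frac{3125\mu^2}{(\lambda-1/4)^5} \rd \in \bP(1,3,5),
\end{align*}
from which birationality, the exact indeterminacy locus, and the boundary behaviour are all read off. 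Your proposal has no substitute for this input, and without it none of the later steps can start.

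Second, the mechanism you propose for locating the weight-$(1,3)$ blow-up --- comparing the cusp fan of $\MSzero$ with the local toric chart of $\XFzero$ and inserting one ray --- would fail to find it, because it is not a toric phenomenon. The indeterminacy point is $(\lambda,\mu)=(1/4,0)$, an interior point of a toric boundary divisor of $\XFzero$: it is where the closure of the discriminant meets that divisor, with contact of order three (the reduced discriminant restricted to $\mu=0$ is $\lambda^2(4\lambda-1)^3$ up to a constant). It is not a torus-fixed point, the weight-$(1,3)$ blow-up is non-toric, and its exceptional $\bP(1,3)$ is not a boundary component of $\MSzero$ at all: it dominates the curve $\{\fC=0\}$, which meets the interior of $\cM_0$. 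What the fan/monodromy comparison does control --- and this is what the paper carries out, computing $T_1=\varphi_{e,e_1}$ and $T_2=\varphi_{e,e_2}$ via Iritani's theorem and taking $\Sigma$ to be the fan on the $O^+(N)$-orbit of $e_1,e_2$ --- is the identification of the two toric divisors of $\XtildeFzero$ adjacent to the maximally unipotent point with the chain of two $\bP(1,2)$'s, of normal bundles $\cO_{\bP(1,2)}(-1)$ and $\cO_{\bP(1,2)}(-5)$, that form $X_\Sigma/O^+(N)$ over the cusp. So the toroidal half of your local matching is sound in outline, and your account of the root construction (the generic $\bZ/2$ stabilizer along the branch locus, encoded by $144\fD^2=\bsDelta$) is correct; but the step you yourself single out as the main obstacle is a discriminant phenomenon, not a cusp-fan phenomenon, and the method you propose does not detect it.
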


The \emph{root construction} is the operation
introduced in \cite{Abramovich-Graber-Vistoli,Cadman_US}
which gives a generic stabilizer to a divisor.
The fan $\Sigma$ comes from the monodromy logarithm
of the period map
around toric divisors of $\XFzero$.
The inverse image in $\cM_\Sigma$ of the unique cusp
in the Baily-Borel-Satake compactification $\cMbar_0$
is the union of two toric divisors.
The intersection point of these divisors
is the maximally unipotent monodromy point.
The monodromy logarithms $N$ around these divisors
satisfy $N^2 \ne 0$ and $N^3=0$,
so that one has type I\!I\!I degenerations there.

The second family is mirror to anti-canonical hypersurfaces
in the toric weak Fano 3-fold of Picard number 2,
obtained as a crepant resolution
of a toric Fano 3-fold of Picard number 1
with an ordinary double point.
The Picard lattice and the transcendental lattice
of a very general member of this family are given by
\begin{align}
 M_1 &= E_8 \bot E_8 \bot
\begin{pmatrix}
 0 & 3 \\
 3 & 0
\end{pmatrix},
 \qquad
 T_0 = U \bot
\begin{pmatrix}
 0 & 3 \\
 3 & 0
\end{pmatrix}.
\end{align}
The moduli space
$
 \cM_1
$
of $M_1$-polarized K3 surfaces
is the quotient of the product $\bH \times \bH$
of upper half planes
under the action of
$
 \Gamma_0(3) \times \Gamma_0(3) \times C_2.
$
Here each $\Gamma_0(3)$ is a congruence subgroups of $\SL_2(\bZ)$
acting on each upper half plane, and
$C_2$ is the cyclic group of order 2 which permutes
two upper half planes.

\begin{theorem} \label{th:main1}
The period map gives an isomorphism
\begin{align}
 \Pitilde : \XtildeFone \simto \MSone
\end{align}
of Deligne-Mumford stacks,
where
$
 \XtildeFone \to \XFone
$
is the root construction along the discriminant, and
$\MSone$ is a toroidal compactification of $\cM_1$
equipped with a natural orbifold structure.
\end{theorem}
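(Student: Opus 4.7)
The plan is to follow the strategy used for Theorem \ref{th:main0}, with simplifications reflecting the absence of any weighted blow-up. First, I would define the period map $\Pi\colon \XFone^\circ \to \cM_1$ on the locus $\XFone^\circ \subset \XFone$ parametrising smooth members of the family, using the natural $M_1$-polarisation coming from the restriction of $\operatorname{Pic}$ of the ambient toric 3-fold. A standard Kodaira--Spencer/Jacobian-ring computation shows that the differential is an isomorphism at a general point, so by Dolgachev's global Torelli theorem for lattice-polarised K3 surfaces the map $\Pi$ is an open immersion; properness considerations then make it birational onto $\cM_1$.

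Second, I would compute the monodromy logarithm $N$ of the variation of Hodge structure around each irreducible component of the discriminant and of the toric boundary of $\XFone$. Each toric boundary divisor should carry nontrivial nilpotent monodromy, and the monodromy cones produced at the toric $0$-strata should generate the fan $\Sigma_1$ describing the toroidal compactification over each cusp of $\bH \times \bH / (\Gamma_0(3) \times \Gamma_0(3))$; this is what lets the already-given toric charts of $\XFone$ serve as the required toroidal local models, obviating the need for a weighted blow-up. Non-toric discriminant components, by contrast, should carry finite-order monodromy reflecting additional automorphisms of the degenerate polarised Hodge structure, and the root construction $\XtildeFone \to \XFone$ with the appropriate multiplicities is exactly what is needed to endow these divisors with the corresponding generic stabilisers and make the extended period map representable.

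Once the local models on both sides are matched, the nilpotent and $\mathrm{SL}_2$-orbit theorems, in their toroidal form \`a la Ash--Mumford--Rapoport, produce an extension $\Pitilde\colon \XtildeFone \to \MSone$. To upgrade this to an isomorphism of Deligne--Mumford stacks I would verify (i) properness of $\Pitilde$, (ii) bijectivity on closed points, using surjectivity via the fact that every boundary stratum of $\MSone$ is realised as a limit of hypersurfaces in the family, and (iii) equality of automorphism groups at every point, which is ensured on the boundary by the root construction. The $C_2$-factor permuting the two copies of $\bH$ should correspond to an explicit involution of $\XFone$, most naturally the one interchanging the two generators of $\operatorname{Pic}$ of the ambient weak Fano 3-fold coming from its two toric divisor classes.

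The principal obstacle is the boundary combinatorics: identifying which cusp of $(\Gamma_0(3) \times \Gamma_0(3) \times C_2)\backslash(\bH \times \bH)$ each toric stratum of $\XFone$ maps to, checking that the resulting monodromy cones reproduce $\Sigma_1$, and pinning down the orders of the root construction along each discriminant component so that the stabilisers on both sides coincide. A convenient feature of this family is that the transcendental lattice has the form $U \bot \sqmat{0}{3}{3}{0}$, so the monodromy should factor nicely cusp-by-cusp, which is presumably why no weighted blow-up is needed, in contrast to the first family.
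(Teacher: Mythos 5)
Your outline replaces the paper's actual proof (which is an explicit computation) with an abstract Torelli/AMRT argument, and as written it has two genuine gaps, both concentrated at the same point. The step ``the differential is an isomorphism at a general point, so by Dolgachev's global Torelli theorem $\Pi$ is an open immersion'' is a non sequitur: local Torelli plus the identification $\cM_1 = \cD^+/\Gamma^+$ only makes $\Pi$ dominant and generically finite, and puts no bound on its degree. Controlling that degree is exactly the delicate issue for this family: the automorphism of the polytope $\Delta_1$ exchanging $v_1 \leftrightarrow v_2$ and $v_4 \leftrightarrow v_5$ induces the involution $(\lambda,\mu) \mapsto (\mu,\lambda)$ of $\XFone$ together with isomorphisms of the underlying K3 surfaces $Y_{(\lambda,\mu)} \cong Y_{(\mu,\lambda)}$, so one must decide whether these isomorphisms respect the $M_1$-polarization (which would force $\Pi$ to be generically $2{:}1$ and kill the theorem) or twist it by an isometry outside $\Gamma^+$; nothing in your outline rules out the first possibility. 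The paper settles this not by Torelli but by closed-form computation: the holomorphic period is $F_4\bigl(\tfrac13,\tfrac23,1,1;-27\lambda,-27\mu\bigr)$, which by the classical identity \eqref{eq:Gausstensor} factors as ${}_2F_1(\tfrac13,\tfrac23,1;x)\,{}_2F_1(\tfrac13,\tfrac23,1;y)$ on the double cover $-27\lambda = x(1-y)$, $-27\mu = y(1-x)$; the full period vector becomes $[3s(x)s(y):-1:s(x):s(y)]$ with $s$ the Schwarz map inverse to a $\Gamma_0(3)$-hauptmodul, and birationality of $\Pi$, with an explicit inverse, is read off from this formula. Your proposal contains no substitute for this degree computation.

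Relatedly, your last paragraph gets the $C_2$ backwards, and this would derail your check (ii) of bijectivity on closed points. The factor $C_2$ permuting the two copies of $\bH$ cannot be realized by an involution of $\XFone$: if it were, $\Pi$ would factor through the corresponding quotient of $\XFone$ and again be generically $2{:}1$ onto $\cM_1$. Instead, the $C_2$ is accounted for by the deck transformation $(x,y) \mapsto (1-y,1-x)$ of the double cover above, i.e.\ the $C_2$-quotient is already performed in passing from $(x,y)$ to the secondary-stack coordinates $(\lambda,\mu)$, while the visible involution $(\lambda,\mu) \mapsto (\mu,\lambda)$ of $\XFone$ descends to an automorphism of $\cM_1$ induced by an isometry \emph{not} in $\Gamma^+$ (essentially the Fricke involution acting diagonally, composed with the swap --- this is the element $\sigma$ in the paper's description of $O(L)$, which acts nontrivially on the discriminant group). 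The remaining parts of your plan are broadly parallel to the paper: the monodromy computation (which the paper performs via Iritani's theorem, \pref{th:Iritani}, using the two crepant resolutions to produce the two maximally unipotent points) does yield the fan with rays $e_1$, $e_1+e_2$, $e_2$, and the root construction along the discriminant does match the generic stabilizer of order two along the image of the diagonal. But without repairing the two points above, the proof does not close.
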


The fan $\Sigma_1$ comes from the monodromy logarithm
of the period map
around toric divisors of $\XFone$.
The cusp in the Baily-Borel-Satake compactification $\cMbar_1$
is the union of two rational curves
intersecting at one point,
and the morphism $\MSone \to \cMbar_1$ is the blow-up
at this intersection point.
There are two maximally unipotent monodromy point in $\MSone$
on the exceptional curve,
corresponding to two crepant resolutions
of the toric Fano 3-fold.
The monodromy logarithm $N$ satisfies
$N \ne 0$ and $N^2=0$ around these divisors,
so that one has type I\!I degenerations there.

The advantage of the secondary stack is that
it comes with a natural family of toric hypersurfaces on it.
This gives a family of degenerate K3 surfaces
on the boundary of the toroidal compactification
of the period domain.
We expect that Theorems \ref{th:main0} and \ref{th:main1}
admits an interpretation
in terms of log period map
of log K3 surfaces
\cite{MR2097359,MR2465224}.

This paper is organized as follows:
In \pref{sc:LPK3},
we recall the notion of lattice-polarized K3 surfaces
and compactifications of their moduli spaces.
In \pref{sc:Dolgachev},
we recall a conjecture of Dolgachev
\cite{Dolgachev_MSK3}
on the relation between polar duality \cite{Batyrev_DPMS}
and mirror symmetry for lattice-polarized K3 surfaces.
In \pref{sc:secondary},
we recall the notion of the secondary stacks
from \cite{MR1976905,
Hacking_CMHA,
Diemer-Katzarkov-Kerr_SGR}.
In \pref{sc:monodromy},
we describe the monodromy of the period map
in terms of the autoequivalence
of the derived category of coherent sheaves
on the mirror manifold
along the lines of \cite{Iritani_QCP}.
%
\pref{th:main0} is proved in \pref{sc:A0}, and
\pref{th:main1} is proved in \pref{sc:A1}.

\emph{Acknowledgment}:
The authors thank Makoto Miura
for collaboration at an early stage of this research;
this paper is originally conceived
as a joint project with him.
A.~N. thanks Hironori Shiga for valuable discussions.
A part of this work is done
while K.~U. is visiting Korea Institute for Advanced Study,
whose hospitality and wonderful working environment
is gratefully acknowledged.
A.~N. is supported by
Waseda University Grant for Special Research Project (2013A-870).
K.~U. is supported by JSPS Grant-in-Aid for Young Scientists No.~24740043.

\section{Lattice polarized K3 surfaces}
 \label{sc:LPK3}

The {\em K3 lattice} is the even unimodular lattice
$
 L = E_8 \bot E_8 \bot U \bot U \bot U
$
of rank 22 and signature $(3,19)$.
Here $E_8$ is the negative-definite even unimodular lattice
of type $E_8$ and
$U$ is the indefinite even unimodular lattice
of rank two.
For a K3 surface $Y$,
set
\begin{align}
 \Delta(Y) = \{ \delta \in \Pic(Y) \mid (\delta, \delta) = -2 \}.
\end{align}
Let $\cL$ be a line bundle
such that $[\cL] = \delta \in \Delta(Y)$.
Riemann-Roch theorem gives
\begin{align}
 h^0(\scL) + h^0(\scL^\vee)
  \ge 2 + \frac{1}{2}(\delta, \delta) =1,
\end{align}
so that $\scL$ or $\scL^\vee$ has a non-trivial section
and hence either $\delta$ or $- \delta$ is effective;
\begin{align}
 \Delta(Y) &= \Delta(Y)^+ \amalg \Delta(Y)^-, \\
 \Delta(Y)^+ &= \{ \delta \in \Delta(Y)
  \mid \delta \text{ is effective} \}, \\
 \Delta(Y)^- &= - \Delta(Y)^+.
\end{align}
The subgroup $W(Y) \subset O(L)$ generated by reflections
with respect to elements in $\Delta(Y)$ acts
properly discontinuously on the connected component
\begin{align}
 V^+ \subset
 V(Y) = \{ x \in H^{1,1}(Y) \cap H^2(Y, \bR)
  \mid (x, x) > 0 \}
\end{align}
containing the K\"{a}hler class.
The fundamental domain is given by
\begin{align}
 C(Y) = \{ x \in V(Y)^+ \mid (x, \delta) \ge 0 \text{ for any }
  \delta \in \Delta(Y)^+ \},
\end{align}
and the K\"{a}hler cone is given
(cf.~e.g.~\cite[Corollary VIII.3.9]{Barth-Hulek-Peters-Van_de_Ven})
by
\begin{align}
 C(Y)^+ = \{ x \in V(Y)^+ \mid (x, \delta) > 0 \text{ for any }
  \delta \in \Delta(Y)^+ \}.
\end{align}
Recall that
\begin{align}
 \Pic(Y) = H^{1,1}(Y) \cap H^2(Y; \bZ)
\end{align}
by the Lefschetz theorem.
Set
\begin{align}
 \Pic(Y)^+ = C(Y) \cap H^2(Y; \bZ), \\
 \Pic(Y)^{++} = C(Y)^+ \cap H^2(Y; \bZ).
\end{align}

Let $M$ be an even non-degenerate lattice
of signature $(1, t)$
where $0 \le t \le 19$.
Choose one of two connected components of
\begin{align}
 V(M) = \lc x \in M_\bR \relmid (x,x) > 0 \rc
\end{align}
and call it $V(M)^+$.
Choose a subset $\Delta(M)^+$ of
\begin{align}
 \Delta(M) = \lc \delta \in M \relmid (\delta, \delta) = -2 \rc
\end{align}
such that
\begin{enumerate}
 \item
$\Delta(M) = \Delta(M)^+ \amalg \Delta(M)^-$
where $\Delta(M)^- = \lc - \delta \relmid \delta \in \Delta(M)^+ \rc$, and
 \item
$\Delta(M)^+$ is closed under addition (but not subtraction).
\end{enumerate}
Define
\begin{align}
 C(M)^+ = \lc h \in V(M)^+ \cap M \relmid
  (h, \delta) > 0 \text{ for all } \delta \in \Delta(M)^+ \rc.
\end{align}

\begin{definition}[{Dolgachev \cite{Dolgachev_MSK3}}]
An {\em $M$-polarized K3 surface} is a pair $(Y, j)$
where $Y$ is a K3 surface and
$
 j : M \hookrightarrow \Pic(Y)
$
is a primitive lattice embedding.
An {\em isomorphism} of $M$-polarized K3 surfaces
$(Y, j)$ and $(Y', j')$ is an isomorphism
$f : Y \to Y'$ of K3 surfaces
such that $j = f^* \circ j'$.
An $M$-polarized K3 surface is
{\em ample} if
\begin{align}
 j(C(M)^+) \cap \Pic(Y)^{++} \ne \emptyset.
\end{align}
\end{definition}


Fix a primitive lattice embedding
$
 i_M : M \hookrightarrow L
$
and let
$
 T
$
be the orthogonal complement.
The period domain
\begin{align} \label{eq:period_domain}
 \cDM = \{ [\Omega] \in \bP(T_\bC)
  \mid (\Omega, \Omega) = 0, \ (\Omega, \Omegabar) > 0 \}
\end{align}
of $M$-polarized K3 surfaces
can be identified with the symmetric homogeneous space
$
 O(2, 19-t) / SO(2) \times O(19-t)
$
of oriented positive-definite 2-planes in $T_\bR$.
It consists of two onnected components
$\cDM^+$ and $\cDM^-$,
each of which is isomorphic to a bounded Hermitian domain
of type IV. Set
\begin{align} \label{eq:GammaM}
 \Gamma(M) = \{ \sigma \in O(L) \mid \sigma(m) = m
  \text{ for any } m \in M \}
\end{align}
and $\GammaM$ be its image
under the natural injective homomorphism
\begin{align}
 \Gamma(M) \hookrightarrow O(T).
\end{align}
Global Torelli Theorem
and the surjectivity of the period map for K3 surfaces show that
the moduli space of ample $M$-polarized K3 surfaces is
bijective with
$
 \cDM^\circ / \GammaM,
$
where
\begin{align}
 \cDM^\circ = \cDM \setminus \lb \bigcup_{\delta \in \Delta(T)}
  H_\delta \cap \cDM \rb
\end{align}
is the complement of reflection hyperplanes
\begin{align}
 H_\delta = \{ z \in T_\bC \mid (z, \delta) = 0 \}.
\end{align}
The closure of the period domain
in the {\em compact dual}
\begin{align}
 \cDv = \{ [\Omega] \in \bP(T_\bC) \mid
  (\Omega, \Omega) = 0 \}
\end{align}
of the period domain
is denoted by
$\cDM^*$.
Its topological boundary is given by
\begin{align}
 \cDM^* \setminus \cDM
  = \bigcup_{I \text{ : isotropic subspace of $M_\bR$}}
     B(I),
\end{align}
where $B(I)$ is defined by
\begin{equation}
 B(I)=\bP(I_{\bC}) \setminus \lb \textstyle{\bigcup_{J \subsetneq I} \bP(J_{\bC})} \rb.
\end{equation}
Since the signature of $M$ is $(2, 19-t)$,
one either has $\rank I = 1$ or 2,
so that $\bP(I_{\bC}) \cap \cDM^*$ is
one point or isomorphic to the upper half plane.
The boundary component is {\em rational}
if $I$ is defined over $\bQ$.
The {\em Satake-Baily-Borel compactification}
is defined by
\begin{align}
 \overline{\cDM / \Gamma}
  = \lb \cDM \cup
       \bigcup_{I \text{ : rational}}
         \bP(I_{\bC}) \cap \cDM^*
      \rb / \Gamma.
\end{align}


Assume that one has
$
 T = U \bot N
$
for a lattice $N$,
and consider the neighborhood of the cusp
corresponding to the isotropic subspace $\bZ e \subset T$.
Let $\{ e, \, f \}$ be a basis of $U$ satisfying
\begin{align}
 (e, e)=(f, f)=0, \ 
 (e,f)=1,
\end{align}
and $\Gamma_e$ be the stabilizer of $e$
in $O(T)$.
With an element $v \in N$,
one can associate an isometry $\varphi_{e,v} \in O(T)$
defined by
\begin{align}
 \varphi_{e,v}(x)
  &= x - \lb \frac{1}{2}(v,v) (e,x)+(v,x) \rb e
   + (e,x) v.
\end{align}
One can easily see that
\begin{align}
 \varphi_{e,v} \circ \varphi_{e,w}
  &= \varphi_{e, v+w},
\end{align}
and
\begin{align}
 \varphi_{e,v}(e) &= e, &
 \varphi_{e,v}(f) &= - \frac{1}{2} (v, v) e + f + v, &
 \varphi_{e,v}(w) &= - (v, w) e + w
\end{align}
for $w \in N$.
It follows that
$\varphi_{e, \bullet}$
gives an embedding
\begin{align} \label{eq:embedding}
 \varphi_{e, \bullet} : N \hookrightarrow O(T)
\end{align}
of groups.
Any element of $\phi \in \Gamma_e$
can be written as
$\psi \circ \varphi_{e,v}$,
where $v \in N$ is defined by
$\phi(f) \equiv f + v$ mod $\bZ e$ and
$\psi \in O(N)$.
This shows that
\begin{align}
 \Gamma_e = O(N) \ltimes N.
\end{align}
The period domain \eqref{eq:period_domain}
can be realized as a tube domain
\begin{align} \label{eq:tube_domain}
 \lc v = v_1 + \sqrt{-1} v_2 \in N_\bC
  \relmid v_i \in N_{\bR},~ (v_2, v_2) > 0 \rc
\end{align}
through the correspondence
\begin{align}
 \Omega = - \frac{1}{2} (v, v) e + f + v.
\end{align}
Under this correspondence,
the action of $\varphi_{e,u} \in N \subset \Gamma_e$ is given by translation
$
 v \mapsto v + u.
$
Hence a neighborhood of the cusp of $\cDM/\Gamma$
is locally isomorphic to
\begin{align}
 \left. \lb N_\bC/N \rb \right/ O(N)^+
  \cong N_{\bCx} / O(N)^+.
\end{align}
Here $O(N)^+$ is the subgroup of $O(N)$ of index $2$
preserving the connected component of $\cDM^+$.
In other words, $O(N)^+$ consists of elements
whose norm and spinor norm have the same sign.
Let $\Sigma$ be a fan in $N$
which is invariant under the action of $O(N)^+$.
Then the toric variety $X_\Sigma$ associated with $\Sigma$
admits a natural action of $O(N)^+$.
By replacing the neighborhood of the cusp
with the neighborhood of the origin in the quotient $X_\Sigma/O(N)^+$,
one obtains a toroidal partial compactification of $\cDM/\Gamma$.

\section{Dolgachev conjecture}
 \label{sc:Dolgachev}

Let $\bT = (\bCx)^n$ be an algebraic torus and
$\bsM = \Hom(\bT, \bCx)$ be the group of characters.
A convex lattice polytope $\Delta \subset \bsM_\bR = \bsM \otimes \bR$
defines a projective toric variety
$
 X = \Proj S_\Delta
$
where $S_\Delta$ is the monoid ring
of the submonoid of $\bsM \oplus \bN$
consisting of lattice points of the cone
over $\Delta \times \{ 1 \} \subset \bsM_\bR \oplus \bR$.
The \emph{polar polytope} of $\Delta$ is defined by
\begin{align}
 \Deltav = \{ v \in \bsMv \mid \la v, m \ra \ge -1
  \text{ for any } m \in \Delta \}
\end{align}
where $\bsMv = \Hom(\bsM, \bZ)$ is the dual lattice of $\bsM$.
The polytope $\Delta$ is said to be \emph{reflexive}
if the polar dual polytope $\Deltav$ is a lattice polytope, and
the origin is the unique interior lattice point of $\Delta$.
The projective toric variety
associated with $\Deltav$ will be denoted by
$\Xv = \Proj S_{\Deltav}$.
The families $\left| - K_X \right|$
and $\left| - K_{\Xv} \right|$
of anti-canonical hypersurfaces are called a
\emph{Batyrev mirror pair}
\cite{Batyrev_DPMS}.

Assume $n=3$ and take very general members
$Y$ and $\Yv$ of $\left| - K_X \right|$
and $\left| - K_{\Xv} \right|$.
Define $M_\Delta$ as the primitive sublattice of $H^2(Y; \bZ)$
generated by the image of
$\iota^* : H^2(X; \bZ) \to H^2(Y; \bZ)$,
and similarly for $M_{\Deltav} \subset H^*(\Yv, \bZ)$.

For a vector $e$ in a lattice $S$,
the positive integer $\sdiv e$ is defined as
the greatest common divisor of
$(e, f) \in \bZ$
for all $f \in S$.
A primitive isotropic vector $e$ is called {\em $m$-admissible}
if $\sdiv e = m$ and there exists another primitive isotropic vector $f$
such that $(e, f) = m$ and $\sdiv f = m$.

Assume that $M_\Delta^\bot \subset H^2(Y;\bZ)$
has an $m$-admissible vector $e$.
Then one has $M_\Delta^\bot = U(m) \bot \Mv_\Delta$
where $U(m)$ is the lattice generated by $e$ and $f$,
and $\Mv_\Delta$ is the orthogonal complement.

\begin{conjecture}[{Dolgachev
\cite[Conjecture (8.6)]{Dolgachev_MSK3}}]
 \label{cj:Dolgachev}
\ 
\begin{enumerate}
\item
The lattice $M_\Delta^\bot$ contains
a 1-admissible isotropic vector.
\item
There exists a primitive embedding
$
 M_\Deltav \subset \Mv_\Delta.
$
\item
The equality
$
 M_\Deltav = \Mv_\Delta
$
holds if and only if
$
 M_\Delta \cong \Pic Y.
$
\end{enumerate}
\end{conjecture}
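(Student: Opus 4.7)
The plan is to address the three parts in order, using the toric combinatorics of reflexive polytopes, monodromy analysis at the large complex structure limit, and the known structure of Dolgachev's lattice-mirror symmetry. For part~(1), I would produce the 1-admissible isotropic vector from a MUM point of the anti-canonical family $|-K_X|$. Any maximal regular triangulation of $\Delta$ (which exists by reflexivity) yields a crepant partial resolution of $X$ and a corresponding large complex structure limit for $Y$. Near this point the monodromy logarithm $N$ on $H^2(Y;\bZ)$ is nilpotent of index three, and the one-dimensional space $N^2 T_\bQ \subset T_\bQ$ is isotropic by the Hodge--Riemann relations. Taking its primitive integral generator $e$, the condition $\sdiv e = 1$ should follow by combining the unimodularity of the K3 lattice with the integral structure of the Jordan block of $N$ attached to the deepest toric stratum. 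The hyperbolic companion $f$ is then automatic: any $g \in T$ with $(e,g) = 1$ can be corrected by a multiple of $e$ into a primitive isotropic vector $f$ with $(e,f) = 1$, since $T$ is even.

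For part~(2), the embedding $M_{\Deltav} \hookrightarrow \Mv_\Delta$ should arise from the Batyrev mirror map. On the mirror side, $M_{\Deltav} \subset \Pic(\Yv)$ is generated by classes of toric divisors of $\Xv$, i.e.~by the vertices of $\Delta$. On the other side, the decomposition $M_\Delta^\bot = U \bot \Mv_\Delta$ from part~(1) realises a neighbourhood of the MUM point as a tube domain over $\Mv_\Delta \otimes \bC$, matching the K\"ahler moduli of $\Yv$. Sending each vertex of $\Delta$ to the coordinate it defines on this tube domain produces the required map, and primitivity of the embedding follows from the unimodularity of $L$ together with primitivity of toric divisor classes in $\Pic(\Yv)$.

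For part~(3), the forward direction uses Dolgachev's lattice-mirror symmetry: if $M_\Delta = \Pic(Y)$ generically, then the transcendental lattice of $Y$ is exactly $M_\Delta^\bot$, so stripping its hyperbolic $U$ summand gives $\Mv_\Delta = \Pic(\Yv)$, forcing equality with $M_{\Deltav}$ via the embedding of part~(2). The converse is contrapositive: a non-toric primitive class in $\Pic(Y)$ corresponds to an extra vanishing cycle at a sub-maximal stratum, whose mirror is a non-toric K\"ahler class on $\Yv$, yielding $\Mv_\Delta \supsetneq M_{\Deltav}$. The main obstacle will be part~(2): upgrading the mirror-map identification to an isometric primitive embedding of integral lattices requires matching integral structures on the A-model and B-model sides, which beyond the classical leading term involves subtle choices of crepant triangulation of $\Delta$ and may require input from the $\widehat{\Gamma}$-integral structure on the quantum cohomology of the resolution.
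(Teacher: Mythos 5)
Your proposal cannot be compared against a proof in the paper, because the paper contains none: this statement is Dolgachev's \emph{conjecture}, quoted verbatim from \cite[Conjecture (8.6)]{Dolgachev_MSK3}, and it is open in general. What the paper actually does is \emph{verify} the conjecture in its two examples. For each of $\Delta_0$ and $\Delta_1$ it computes all four lattices explicitly --- Mo{\u\i}{\v{s}}ezon's theorem shows that $\Pic$ of a very general anticanonical member is generated by restrictions of toric divisors, and the Picard/transcendental lattices are taken from \cite[Theorem 3.1]{MR2962398} --- and then checks by inspection that $M_{\Delta}^\bot = U \bot M_{\Deltav}$ and $M_\Delta = \Pic Y$, ``so that \pref{cj:Dolgachev} holds in this case.'' A purported \emph{general} proof by soft arguments should therefore have set off alarm bells; the honest answers here were either ``this is an open conjecture'' or a case-by-case verification along the paper's lines.

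On its own terms the sketch also has unfixable gaps. In part (1), the assertion that the primitive generator $e$ of $N^2 T_\bQ \cap T$ has $\sdiv e = 1$ does not follow from unimodularity of $L$: the relevant lattice is $T = M_\Delta^\bot$, which is in general far from unimodular --- for the second family in the paper $T_1 = U \bot U(3)$, which contains primitive isotropic vectors of divisor $3$ --- so identifying \emph{which} isotropic vector a given MUM point produces, and that its divisor is $1$, is precisely the content to be proven (your reduction of the existence of $f$ to the case $\sdiv e = 1$ is correct, but that is the trivial half). In part (2), you concede the decisive point yourself: upgrading the mirror-map identification to an isometric primitive embedding of \emph{integral} lattices is exactly what is conjectural; Iritani's theorem (\pref{th:Iritani}) identifies $\HAZ^\amb$ with $\HBZ^\vc$ as integral variations of Hodge structure, but this does not deliver the embedding $M_{\Deltav} \subset \Mv_\Delta$ inside the K3 lattice that the conjecture demands. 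In part (3), the forward implication silently uses ``the transcendental lattice of $Y$, with a hyperbolic summand removed, is the Picard lattice of $\Yv$,'' which is equivalent to the statement being proven --- the argument is circular --- and the converse's claim that an extra algebraic class on $Y$ mirrors to a non-toric K\"ahler class on $\Yv$ is an unsupported heuristic. So the proposal is not a proof, and none is known.
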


\section{Secondary stack}
 \label{sc:secondary}

Let $\Delta$ be a reflexive polytope in $\bsM_\bR$ and
$
 A
  = \lc v_0=0, v_1, \ldots, v_{n+r} \rc
$
be the set of lattice points of $\Delta$.
It gives the \emph{fan sequence}
\begin{align} \label{eq:fan_sequence}
 0 \to \bL \to \bZ^{n+r} \xto{\beta} \bsNv \to 0,
\end{align}
where $\bsNv = \bsM$
and $\bL$ is the kernel of the homomorphism
$
 P \colon \bZ^{n+r} \to \bsNv
$
sending the $i$-th coordinate vector $e_i$ to $v_i$
for $i=1, \ldots, {n+r}$.
We write a basis of $\bL$ as
$\{ c^{(p)} \}_{p=1}^{r}$
where $c^{(p)} = (c_1^{(p)}, \ldots, c_{n+r}^{(p)})$.
By setting $\vtilde_i = (v_i, 1) \in \bsNv \oplus \bZ$
for $i=0, \ldots, {n+r}$,
one obtains a sequence
\begin{align} \label{eq:e_fan_sequence}
 0 \to \bL \to \bZ^A \xto{\betatilde} \bsNv \oplus \bZ \to 0,
\end{align}
where the map
$
 \Ptilde \colon \bZ^A \cong \bZ^{{n+r}+1} \to \bsNv \oplus \bZ
$
sends $e_i$ to $\vtilde_i$
for $i=0, \ldots, {n+r}$,
and an element $c^{(p)} \in \bL$
is mapped to
$$
 \ctilde^{(p)} = (-c_1^{(p)}-\cdots-c_{n+r}^{(p)}, c_1^{(p)}, \ldots, c_{n+r}^{(p)})
  \in \bZ^{{n+r}+1}.
$$
The sequence
\begin{align} \label{eq:divisor_sequence}
 0 \to \bsMv \xto{\beta^\vee} \bZ^{n+r} \to \bL^\vee \to 0
\end{align}
dual to \eqref{eq:fan_sequence} is called
the \emph{divisor sequence}.

Given a polyhedral sudivision
$\Deltaul = \{ \Delta_1, \ldots, \Delta_k \}$ of $\Delta$,
one sets
\begin{align}
 C(\Deltaul) = \lc \psi \in \bR^A \relmid g_\psi \text{ is affine linear
  over each polytope in } \Deltaul \rc,
\end{align}
where $g_\psi : \Delta \to \bR$ is the convex piecewise linear function
associated with $\psi : A \to \bR$.
The cone $C(\Deltaul)$ is invariant
under the additive action
of the space $\Aff(\bsM_\bR)$ of affine linear functions on $\bsM_\bR$.
The quotient cones $C(\Deltaul)/\Aff(\bsM_\bR)$ constitute
a complete fan $\cF(A)$ in $\bR^A/\Aff(\bsM_\bR)$
called the \emph{secondary fan}
\cite{Gelfand-Kapranov-Zelevinsky_DRMD}.
Maximal cones of the secondary fan $\cF(A)$ correspond
to coherent triangulations of the polytope $\Delta$.
A {\em circuit} is an affinely dependent subset
any of whose proper subset is affinely independent
\cite[7.1.B]{Gelfand-Kapranov-Zelevinsky_DRMD}.
Adjacencies of triangulations
come from modifications along circuits
\cite[Theorem 7.2.10]{Gelfand-Kapranov-Zelevinsky_DRMD}.

The elements $\{ \ctilde^{(p)} \}_{p=1}^{r}$
generate one-dimensional cones of the secondary fan $\cF(A)$,
which gives a structure of a stacky fan on $\cF(A)$.
The toric stack $\XFA$ associated with the resulting stacky fan
will be called the \emph{secondary stack}
\cite{Diemer-Katzarkov-Kerr_SGR}.
The dense torus of the secondary stack
can naturally be identified with $\bL_{\bCx}$.
The coarse moduli space of $\XFA$
is the \emph{Chow quotient}
of $\bP(\bC^A)$ by $\bT$
\cite{MR1119943,MR1174606}.

For a face $\Delta'$ of $\Deltaul$
(i.e., a face of some $\Delta_i$ in $\Deltaul$),
set
$$
 C(\Deltaul, \Delta')
  = \lc \psi \in C(\Deltaul) \relmid g_\psi \text{ attains its minimum on } \Delta' \rc.
$$
The cone $C(\Deltaul, \Delta')$ is invariant
under the action of constant functions in $\Aff(\bsM_\bR)$.
The abelian group $\bZ^A/\bZ$ can naturally be identified
with $\bsN_H = \Hom(\bCx, H)$.
The cones $C(\Deltaul, \Delta')/\bR$ constitutes
a complete fan $\cFtilde(A)$ in $\bR^A/\bR$
called the \emph{Lafforgue fan}
\cite{MR1976905,Hacking_CMHA}.
The toric stack associated with Lafforgue fan
equipped with a stacky structure
is called the \emph{Lafforgue stack}
\cite{Diemer-Katzarkov-Kerr_SGR}.
The natural homomorphism
$
 \bZ^A/\bZ \cong \bsN_H \to \bZ^A/\Aff_\bZ(\bsM) \cong \bsN
$
defines a morphism
$
 \cFtilde(A) \to \cF(A)
$
of fans,
which induces a torus-equivariant morphism
$
 \varphi_X \colon X_{\cFtilde(A)} \to \XFA
$
of toric stacks.

The Laurent polynomial
\begin{align}
 W = \sum_{i=0}^{{n+r}} a_i x_1^{v_{i1}} \cdots x_n^{v_{in}}
\end{align}
gives a section of $(\varphi_X)_* \lb \cO_{X(\Ftilde(A))}(1) \rb$,
where $\cO_{\XFtildeA}(1)$ is the line bundle
which restricts to the anti-canonical bundle $\scO(-K_X)$
on a general fiber.
The zero of $W$ gives a family
$\varphi_Y : \fY_A \to \XFA$
of hypersurfaces.

The \emph{discriminantal variety} $\bsnabla_A \subset \bC^A$
is the closure of all $a \in \bC^A$ such that
there exists $x \in (\bCx)^n$ satisfying
\begin{align}
 W
  = \frac{\partial W}{\partial x_1}
  = \cdots
  = \frac{\partial W}{\partial x_n}
  =0.
\end{align}
The \emph{$A$-discriminant}
is the irreducible polynomial
$\bsDelta_A \in \bZ[a_0, \ldots, a_{n+r}]$
vanishing on $\bsnabla_A$.
The Newton polytope of $\bsDelta_A$ is
the \emph{secondary polytope} of $A$.
The normal fan to the secondary polytope
is the secondary fan $\scF(\Sigma)$.
Since $\bsnabla_A \cap (\bCx)^A$
is invariant under the action of $\bT \times \bCx$,
it descends to a hypersurface
$\bsnablabar_A \subset \bL_{\bCx}$
called the \emph{reduced $A$-discriminantal variety}.
The \emph{Horn-Kapranov uniformization}
\cite{MR1510591,MR1109634}
is the rational map
$
 h \colon \bP^{r} \to \bL_{\bCx},
$
$
 \lambda = [\lambda_1:\cdots:\lambda_{r+1}]
  \mapsto (\Phi_1(\lambda_1), \ldots, \Phi_{r}(\lambda)) 
$
where
\begin{align} \label{eq:Horn-Kapranov}
 \Phi_q(\lambda) &= \prod_{j=1}^n
  \lb \sum_{p=1}^{r} c_j^{(p)} \lambda_p \rb^{c_j^{(q)}}.
\end{align}
The image of $h$ is the reduced $A$-discriminantal variety.
The restriction of the family $\varphi_Y$
to the complement
$
 \XFA^\reg = \bL_{\bCx} \setminus \bsnabla_A
$
of the discriminantal variety
will be denoted by
\begin{align}
 \varphi_Y^\reg : \fY_A^\reg \to \XFA^\reg.
\end{align}

\section{Mirror symmetry and monodormy}
 \label{sc:monodromy}

Hodge theory gives
an integral variation
$
 (\HBZ^\vc, \nabla^B, \scrF_B^\bullet, Q_B)
$
of polarized pure Hodge structures
\cite[Definitions 6.5 and 6.7]{Iritani_QCP}, where
\begin{itemize}
 \item
$\HBZ^\vc$ is the local system on $\XFA^\reg$
whose fiber over $[a] \in \XFA^\reg$
is the sublattice of $H^{n-1}(Y_a, \bZ)$
generated by vanishing cycles,
 \item
$\nabla^B$ is the Gauss-Manin connection
on $\HBZ^\vc \otimes \cO_{\XFA^\reg}$,
 \item
$\scrF_B^\bullet$ is the Hodge filtration, and
 \item
$Q_B$ is the polarization.
\end{itemize}
On the mirror side,
let
\begin{align}
 H^\bullet_\amb(\Yv; \bC)
  = \Image(\iota^* : H^\bullet(\Xv; \bC) \to H^\bullet(\Yv; \bC))
\end{align}
be the subspace of $H^\bullet(\Yv; \bC)$
coming from the cohomology classes of the ambient toric variety,
and set
\begin{align}
 U = \lc \sigma = \beta + \sqrt{-1} \omega \in H^2_\amb(\Yv; \bC) \relmid
   \la \omega, d \ra \gg 0 \text{ for any non-zero }
   d \in \NE(\Yv) \rc,
\end{align}
where $\NE(\Yv)$ is the semigroup of effective curves.
Let $\{ p_i \}_{i=1}^r$ be an integral basis
of the nef cone of $\Yv$, and
$(\sigma^i)_{i=1}^r$ be the dual coordinate on $H^2_\amb(Y; \bC)$.
The {\em ambient A-model VHS}
$(\HAZ^\amb, {\nabla^A}, {\scrF_A}^\bullet, Q_A)$
consists of a local system $\HAZ^\amb$ on $U$,
the Dubrovin connection
\begin{align}
 \nabla^A = d + \sum_{i=1}^r (p_i \circ_\sigma) \, d \sigma^i
  \colon \scrH_A \to \scrH_A \otimes \Omega_{U}^1,
\end{align}
on $\scrH_A = H_\amb^\bullet(\Yv;\bC) \otimes \cO_U$,
the Hodge filtration
$$
 \scrF_A^p = H_\amb^{4-2p}(\Yv;\bC) \otimes \scO_U,
$$
and the Poincar\'{e} pairing
$$
 Q_A : \scrH_A \otimes \scrH_A \to \scO_U.
$$
See \cite[Definition 6.2]{Iritani_QCP}
for details.
The fiber of the local system $\HAZ^\amb$ is isomorphic
to the subgroup $\cN_\amb(\Yv)$
of the numerical Grothendieck group $\cN(\Yv)$
generated by classes pulled-back from $\cN(\Xv)$.
The numerical Grothendieck group $\cN(\Yv)$
is the quotient of the Grothendieck group $K(\Yv)$
by the radical of the Euler form
\begin{align}
 \chi(\cE, \cF)
  &:= \sum_{i=0}^2 (-1)^i \dim \Ext^i(\cE, \cF).
\end{align}
Riemann-Roch theorem shows that
$\cN(\Yv)$ is isomorphic to the image of $K(\Yv)$ by the map
\begin{align}
 v : K(\Yv) \to H^*(\Yv, \bQ), \quad
  \cE \mapsto \ch(\cE) \cup \Gammahat_\Yv,
\end{align}
where the $\Gammahat$-class
is a square root of the Todd class
\cite{Iritani_ISQCMSTO}.
The polarization $Q_A$ is mapped to minus the Euler form
under this isomorphism.

Let $u_i \in H^2(X; \bZ)$ be the Poincar\'{e} dual
of the toric divisor
corresponding to the one-dimensional cone
$\bR \cdot v_i \in \Sigma$ and
$v = u_1 + \cdots + u_m$
be the anticanonical class.
Givental's {\em $I$-function} is defined as the series
\begin{align}
 I_{X, Y}(q, z) = e^{p \log q / z}
  \sum_{d \in \NE(X)} q^d \,
  \frac{
   \prod_{k=-\infty}^{\la d, v \ra} (v + k z)
   \prod_{j=1}^m \prod_{k=-\infty}^0
    (u_j + k z)}
  {\prod_{k=-\infty}^0
    (v + k z)
   \prod_{j=1}^m \prod_{k=-\infty}^{\la d, u_j \ra}
    (u_j + k z)},
\end{align}
which is a map from $\Uv$
to the classical cohomology ring $H^\bullet(X; \bC[z^{-1}])$.
When $Y$ is a K3 surface,
Givental's {\em $J$-function} is given by
\begin{align}
 J_Y(\tau, z)
  = \exp(\tau/z).
\end{align}
If we write
\begin{align}
 I_{X, Y}(q, z) = F(q) + \frac{G(q)}{z} + \frac{H(q)}{z^2} + O(z^{-3}),
\end{align}
then Givental's mirror theorem
\cite{Givental_EGWI,
Givental_MTTCI,
Coates-Givental}
states that
\begin{align}
 \Euler(\omega_X^{-1}) \cup I_{X, Y}(q, z)
  = F(q) \cdot \iota_* J_Y(\varsigma(q), z)
\end{align}
where $\Euler(\omega_X^{-1}) \in H^2(X; \bZ)$
is the Euler class of the anticanonical bundle of $X$,
and the {\em mirror map}
$
 \varsigma(q) : \Uv \to H^2_\amb(Y; \bC)
$
is defined by
\begin{align}
 \varsigma(q) = \iota^* \lb \frac{G(q)}{F(q)} \rb.
\end{align}
The relation between $\tau = \varsigma(q)$ and
$\sigma = \beta + \sqrt{-1} \omega$ is given by
$\tau = 2 \pi \sqrt{-1} \sigma$,
so that $\Im(\sigma) \gg 0$ corresponds to $\exp(\tau) \sim 0$.
The functions $F(q)$, $G(q)$ and $H(q)$ satisfy
the Gelfand--Kapranov--Zelevinsky hypergeometric differential equations,
and give periods for the B-model VHS.

\begin{theorem}[{Iritani \cite[Theorem 6.9]{Iritani_QCP}}]
 \label{th:Iritani}
There is an isomorphism
\begin{align}
 \Mir_\scY : \varsigma^*
  (\HAZ^\amb, \nabla^A, \scrF_A^\bullet, Q_A)
  \simto (\HBZ^\vc, \nabla^B, \scrF_B^\bullet, Q_B)
\end{align}
of integral variations of pure and polarized Hodge structures.
\end{theorem}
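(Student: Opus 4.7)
\medskip

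\noindent\emph{Proof sketch (following Iritani).}
The plan is to construct the isomorphism $\Mir_\scY$ by comparing both sides
to a common third object, namely the Gelfand--Kapranov--Zelevinsky (GKZ)
hypergeometric system attached to the set $A$, and then to upgrade
the comparison of connections to a comparison of Hodge filtrations,
polarizations, and integral lattices. First I would set up the B-model side
explicitly: the period integrals of $\omega/W$ against vanishing cycles
in $Y_a$ satisfy the GKZ system on $\bL_{\bCx}$, so $(\HBZ^\vc,
\nabla^B, \scrF_B^\bullet, Q_B)$ is identified near the large complex
structure limit with a variation built out of solutions of this system,
with the Hodge filtration coming from the order of vanishing in $z$
and the polarization from the intersection form on vanishing cycles.

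Next I would handle the A-model side. Using Givental's mirror theorem
in the form $\Euler(\omega_X^{-1}) \cup I_{X,Y}(q,z) = F(q)\cdot
\iota_* J_Y(\varsigma(q), z)$ recalled above, the components of the
$I$-function provide a full set of flat sections of $\nabla^A$ after
pull-back along the mirror map $\varsigma$; the structure of the
$I$-function as a hypergeometric series in $q$ shows that these flat
sections also solve the same GKZ system as the B-model periods. This
matches $\varsigma^*(\HAZ^\amb \otimes \cO, \nabla^A)$ with
$(\HBZ^\vc \otimes \cO, \nabla^B)$ as holomorphic flat bundles.
The filtration matching is then forced: on the A-side $\scrF_A^p$ is
the span of components of degree $\le 4-2p$, which in the $I$-function
corresponds to the order of $z^{-1}$, and this is exactly how
$\scrF_B^\bullet$ is read off from the hypergeometric periods.
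Similarly the Poincar\'{e} pairing $Q_A$ matches the vanishing-cycle
intersection form $Q_B$ after accounting for the usual sign from the
residue isomorphism.

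The main obstacle is matching the \emph{integral} lattices $\HAZ^\amb$
and $\HBZ^\vc$; a holomorphic comparison is essentially classical, but
specifying which $\bZ$-basis on each side corresponds to which is
delicate. Iritani's solution, which I would follow, is to use the
$\Gammahat$-integral structure: on the A-side one defines a flat
$\bZ$-lattice by images of $\cE \in K(\Yv)$ under $v(\cE) =
\ch(\cE) \cup \Gammahat_{\Yv}$ twisted by a suitable $z$-dependent
factor, and checks that the resulting multi-valued flat sections agree
asymptotically, as $\Im\sigma \to +\infty$, with the vanishing-cycle
lattice generators via the Mellin--Barnes / stationary-phase expansion
of the hypergeometric integrals. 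The key computation is that the
asymptotic expansion of a Gamma-factor in the GKZ integrand is exactly
the Taylor expansion of $\Gammahat_{\Yv}$, which is what forces the
$\Gammahat$-class to appear; this identification, combined with the
flatness of both lattices under $\nabla^A = \nabla^B$, produces the
required isomorphism $\Mir_\scY$ of integral VHS. Everything else, in
particular compatibility with monodromy around boundary divisors, then
follows formally because both lattices are characterised by their
asymptotic behaviour at the large-radius/large-complex-structure cusp.
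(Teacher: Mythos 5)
The paper gives no proof of this statement: it is quoted as Iritani's Theorem 6.9, and the authors' only commentary is that the main step is Iritani's Theorem 5.7 (comparing A-model and B-model periods), from whose proof they extract the identification of the B-model monodromy around $q_i \mapsto e^{2\pi\sqrt{-1}}q_i$ with the isometry $(-)\otimes\iota^*(\cL_i^\vee)$. Your sketch is a faithful outline of how Iritani's cited proof actually runs --- the GKZ system as the common comparison object, Givental's mirror theorem to match flat bundles and Hodge filtrations, and the $\Gammahat$-integral structure pinned down against the vanishing-cycle lattice by stationary-phase/Mellin--Barnes asymptotics at the large-radius limit --- so it is consistent with, and considerably more detailed than, what the paper itself provides. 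Be aware, though, that your argument remains a plan that defers the substantive analytic content (the asymptotic identification of the two $\bZ$-lattices and the resulting monodromy compatibility) to Iritani's paper, which is exactly what the authors do by citation; as written it is an accurate reconstruction of the strategy rather than an independent proof.
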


The main step in the proof of \pref{th:Iritani}
is \cite[Theorem 5.7]{Iritani_QCP},
which relates periods of A-model VHS
and those of B-model VHS.
In the proof of \cite[Theorem 5.7]{Iritani_QCP},
Iritani shows that the monodromy of the B-model VHS
along a small loop
$q_i \mapsto e^{2 \pi \sqrt{-1}} q_i$
is mapped to the isometry
\begin{align} \label{eq:mon0_1}
 (-) \otimes \iota^*(\cL_i^\vee) \colon \cN(\Yv) \to \cN(\Yv)
\end{align}
where $\cL_i$ is the line bundle on $\Xv$
with $c_1(\cL_i) = p_i$.
Note that this isometry comes from
an autoequivalence of $D^b \coh \Yv$.
The relation between monodromy of the periods
and autoequivalences of the derived category of coherent sheaves
on the mirror manifold
goes back to \cite{Kontsevich_ENS98,Horja_DCAMS}.

When $\Yv$ is a K3 surface,
then the numerical Grothendieck group
$\cN(\Yv)$ is the direct sum
$\bZ [\cO_\Yv] \oplus \Pic(\Yv) \oplus \bZ [\cO_p]$
of the Picard group $\Pic(\Yv)$
and the free module generated by
the classes of the structure sheaf
and a skyscraper sheaf.
The embedding of the Picard group
to the numerical Grothendieck group is given by
$[\cO_\Yv(D)] \mapsto [\cO_D]$.
\begin{comment}
\begin{lemma}
\begin{align}
 D \cdot E
  &= - \chi(\cO_D, \cO_E).
\end{align}
\end{lemma}
\begin{proof}
The exact sequence
\begin{align} \label{eq:ex1}
 0 \to \cO(-D) \to \cO \to \cO_D \to 0,
\end{align}
gives the relation
\begin{align}
 [\cO] = [\cO(-D)] + [\cO_D].
\end{align}
The dual of \eqref{eq:ex1} gives the identity
\begin{align}
 \cO_D^\vee
  = [\cO] - [\cO(D)]
  = - [\cO_D(D)]
  = - [\cO_D(D \cdot D)]
  = - [\cO_D] - (D \cdot D) [\cO_p].
\end{align}
The definition of intersection numbers gives
\begin{align}
 D \cdot E
  &= \chi(\cO_D^\vee, \cO_E) \\
  &= -(\cO_D^\vee, \cO_E) \\
  &= -(- [\cO_D] - (D \cdot D) [\cO_p], \cO_E) \\
  &= (\cO_D, \cO_E).
\end{align}
so that
\begin{align}
 (\cO_D, \cO_E) = D \cdot E.
\end{align}
\end{proof}
\end{comment}
The lattice structure is given by
\begin{gather}
 (\cO_\Yv, \cO_\Yv) = -2, \
 (\cO_\Yv, \cO_D) = -\chi(\cO_D), \ 
 (\cO_\Yv, \cO_p) = -1, \\
 (\cO_D, \cO_E) = D \cdot E, \ 
 (\cO_D, \cO_p) = -\chi(\cO_p, \cO_p) = 0,
\end{gather}
where $\cO_{\Yv}$ is the structure sheaf,
$\cO_D$ is the structure sheaf of a divisor $D$,
and $\cO_p$ is a skyscraper sheaf.
If $D$ is a smooth curve of genus $g$,
then one has
$
 \chi(\cO_D) := \dim H^0(\cO_D) - \dim H^1(\cO_D) = 1 - g
$
and
$
 D \cdot D = 2g-2.
$
The action of $(-) \otimes \cO_\Yv(-D) : K(\Yv) \to K(\Yv)$
is given by
\begin{align}
 [\cO_\Yv] &\mapsto [\cO(-D)] = [\cO_\Yv] - [\cO_D], \\
 [\cO_E] &\mapsto [\cO_E(-D)]
  = [\cO_E] - (D \cdot E) [\cO_p], \\
 [\cO_p] &\mapsto [\cO_p].
\end{align}

\section{The family associated with $A_0$}
 \label{sc:A0}

\subsection{The secondary fan}

Let
$
 \Delta_0 = \Conv \{ v_1, v_2, v_3, v_4, v_5 \}
$
be the reflexive polytope
with 5 vertices
in \pref{fg:P0};
\begin{align}
 \beta_0 &=
\begin{pmatrix}
 v_1 & v_2 & v_3 & v_4 & v_5
\end{pmatrix}
 =
\begin{pmatrix}
 1 & 0 & 0 & 0 & -1 \\
 0 & 1 & 0 & 0 & -1 \\
 0 & 0 & 1 & -1 & -2 \\
\end{pmatrix}.
\end{align}
The homomorphism $\bZ^{n+r} \to \bL^\vee$
in the divisor sequence
\eqref{eq:divisor_sequence}
is represented by the matrix
\begin{align}
\begin{pmatrix}
 -2 & 0 & 0 & 1 & 1 & 0 \\
 -5 & 1 & 1 & 2 & 0 & 1
\end{pmatrix},
\end{align}
and the secondary fan is given in \pref{fg:sfan0}. 
\pref{fg:sfan2} shows maximal cones
of the secondary fan $\cF(A_0)$.
The cone $I$ corresponds to the large complex structure limit point.
The corresponding coherent triangulation is given by
%
\begin{comment}
\begin{shaded}
\begin{align*}
\bordermatrix{
 & a_0 & a_1 & a_2 & a_3 & a_4 & a_5 \cr
 & -2 & 0 & 0 & 1 & 1 & 0 \cr
 & -5 & 1 & 1 & 2 & 0 & 1
}
\end{align*}
\end{shaded}
\end{comment}
%
\begin{figure}[ht]
\centering
\input{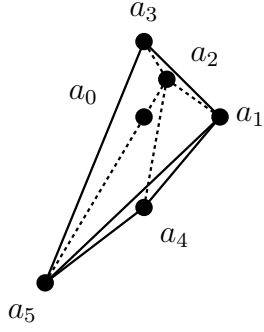}
\caption{The polytope $\Delta_0$}
\label{fg:P0}
\end{figure}
\begin{figure}[ht]
\begin{minipage}{.5 \linewidth}
\centering
\input{sfan0.pst}
\caption{The secondary fan $\cF(A_0)$}
\label{fg:sfan0}
\end{minipage}
\begin{minipage}{.5 \linewidth}
\centering
\input{sfan2.pst}
\caption{Maximal cones of $\cF(A_0)$}
\label{fg:sfan2}
\end{minipage}
\end{figure}
\begin{align*}
 \{ a_0, a_1, a_2, a_3 \}
  \cup \{ a_0, a_2, a_3, a_5 \}
  \cup \{ a_0, a_1, a_3, a_5 \}
  \cup \{ a_0, a_1, a_2, a_4 \}
  \cup \{ a_0, a_2, a_4, a_5 \}
  \cup \{ a_0, a_1, a_4, a_5 \}.
\end{align*}
In the cone $\II$,
it is given by
\begin{align*}
 \{ a_1, a_2, a_3, a_4 \}
  \cup \{ a_2, a_3, a_4, a_5 \}
  \cup \{ a_1, a_3, a_4, a_5 \},
\end{align*}
which is the `vertical' triangulation.
In the cone $\III$,
it is given by 
\begin{align*}
 \{ a_1, a_2, a_3, a_5 \}
  \cup \{ a_1, a_2, a_4, a_5 \},
\end{align*}
which is the `horizontal' triangulation.
In the cone $IV$,
it is given by
\begin{align*}
 \{ a_0, a_1, a_2, a_3 \}
  \cup \{ a_0, a_1, a_3, a_5 \}
  \cup \{ a_0, a_1, a_2, a_5 \}
  \cup \{ a_0, a_2, a_3, a_5 \}
  \cup \{ a_1, a_2, a_4, a_5 \}.
\end{align*}
$I$ is obtained from $\II$ by the subdivision at $a_0$, i.e.,
the modification along the circuit $\{a_0, a_3, a_4 \}$.
The family over the corresponding divisor
is the union of three rational surfaces
intersecting along three rational curves.
All these three curves passes through two points,
so that the dual graph of the intersection is the division of $S^2$
into two triangles.
$\II$ and $\III$ are related
by the modification along the circuit $\{ a_1, a_2, a_3, a_4, a_5 \}$.
The family parametrized by the corresponding divisor
is a family of K3 surfaces.
$\III$ and $IV$ are related
by the subdivision of $\{ a_1, a_2, a_3, a_5 \}$,
i.e., the modification along the circuit $\{ a_0, a_1, a_2, a_3, a_5 \}$.
Note that one has
\begin{align*}
 \vol \{ a_1, a_2, a_3, a_5 \} &= 5, \\
 \vol \{ a_0, a_1, a_2, a_5 \} &= 2.
\end{align*}
The corresponding family is the family of K3 surfaces
associated with the polytope
$\{ a_1, a_2, a_3, a_5 \}$.
$IV$ and $I$ are related by the modification
along the circuit $\{ a_0, a_1, a_2, a_4, a_5 \}$.
The corresponding family is the union of four rational surfaces
whose dual graph is a tetrahedron.

\begin{comment}
\begin{shaded}
\begin{proof}
In the exact sequence
\begin{align*}
 0 \to \Aff(\bsM_\bR) \xto{\Ptilde^T} \bR^A \to \bR^A/\Aff(\bsM_\bR) \cong \bR^2 \to 0,
\end{align*}
one has
\begin{align*}
 (0,\lambda_1,0,0,\lambda_4,0) \mapsto (\lambda_4, \lambda_1),
\end{align*}
so that
the regular polyhedral decomposition
corresponding to the point $(\lambda_4, \lambda_1) \in \bR^2$
is associated with the function $g_\psi : \bR^2 \to \bR$
coming from $\psi : A \to \bR$ defined by
\begin{align*}
 g(v_0) &= 0, \\
 g(v_1) &= \lambda_1, \\
 g(v_2) &= 0, \\
 g(v_3) &= 0, \\
 g(v_4) &= \lambda_4, \\
 g(v_5) &= 0.
\end{align*}
\end{proof}
\end{shaded}
\begin{figure}[ht]
\centering
\input{sfan1.pst}
\caption{The secondary fan for $P_0$}
\label{fg:sfan1}
\end{figure}
\end{comment}

\subsection{The period domain}

The Picard lattice and the transcendental lattice
of a very general member of the family
$\fY_{A_0} \to \XFzero$ of K3 surfaces
associated with $A_0$ is given as follows:

\begin{theorem}[{\cite[Theorem 3.1]{MR2962398}}]
 \label{th:Nagano0}
The Picard lattice and the transcendental lattice
of a very general member of the family
$\fY_{A_0} \to \XFzero$ are given by
\begin{align}
 M_0 &= E_8 \bot E_8 \bot \sqmat{2}{1}{1}{-2},
\qquad
 T_0 = U \bot \sqmat{2}{1}{1}{-2}.
\end{align}
\end{theorem}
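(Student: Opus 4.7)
The plan is to determine $M_0$ and $T_0$ in turn. First, I identify the generic $Y$ in the family with a smooth anticanonical hypersurface in a maximal projective crepant partial (MPCP) toric resolution $X'$ of $X_{\Delta_0}$, whose fan has rays through all points of $A_0 \setminus \{0\}$. A Noether--Lefschetz-type argument then shows that for very general $Y$, the Picard lattice $\Pic(Y)$ equals the image of the restriction map from $\Pic(X')$ to $\Pic(Y)$, i.e., the sublattice generated by restrictions of toric divisors.

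Rather than compute the full rank-$18$ intersection matrix on $X'$ directly, I would exploit Batyrev mirror symmetry and work on the mirror side. The family on $\XFzero$ is mirror to anticanonical K3 hypersurfaces in $X^\vee = \bP(\cO_{\bP^2} \oplus \cO_{\bP^2}(2))$, whose rank-$2$ Picard lattice is easy to handle. On $X^\vee$, $\Pic$ is generated by $H = \pi^*(H_{\bP^2})$ and the tautological class $\xi$, subject to $H^3 = 0$ and $\xi^2 = 2 \xi H$ (from $c_1(\cO \oplus \cO(2)) = 2H$, $c_2 = 0$). For a generic $Y^\vee \in |2\xi + H|$, intersection theory on $X^\vee$ yields
\begin{align}
 H|_{Y^\vee}^2 = 2, \qquad
 H|_{Y^\vee} \cdot \xi|_{Y^\vee} = 5, \qquad
 \xi|_{Y^\vee}^2 = 10,
\end{align}
giving a Gram matrix $\sqmat{2}{5}{5}{10}$, which is isomorphic to $\sqmat{2}{1}{1}{-2}$ via the basis change $(H, \xi - 2H)$.

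Next, I invoke the lattice-theoretic Dolgachev--Nikulin mirror correspondence for K3 surfaces (the content underlying \pref{cj:Dolgachev}): if the Picard lattice of the very general mirror $Y^\vee$ is $\sqmat{2}{1}{1}{-2}$, then the transcendental lattice of $Y$ is
\begin{align}
 T_0 = U \bot \sqmat{2}{1}{1}{-2}.
\end{align}
Finally, compute $M_0 = T_0^\perp \subset L$ using Nikulin's theory of primitive embeddings: a primitive embedding of $T_0$ into $L = E_8 \bot E_8 \bot U \bot U \bot U$ is unique up to $O(L)$, and its orthogonal complement is $E_8 \bot E_8 \bot \sqmat{2}{1}{1}{-2}$, matching the stated formula for $M_0$.

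The main obstacle is the Noether--Lefschetz step: one must verify that the Picard lattice of the \emph{very general} fiber is exactly the toric lattice rather than a superlattice containing additional classes. This reduces to showing that the period map from $\XFzero$ to the $2$-dimensional moduli space $\cM_0$ is dominant. It can be established either by a direct dimension count combined with the existence of a maximally unipotent monodromy point (the large complex structure limit corner of the secondary fan) to rule out constancy, or by explicitly checking that the infinitesimal period map at a generic point is an isomorphism via the Gauss--Manin connection computed from the Laurent polynomial $W$.
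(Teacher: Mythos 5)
Your mirror-side computation (step 2) is correct: on $\Xv = \bP(\cO_{\bP^2} \oplus \cO_{\bP^2}(2))$ the anticanonical class is $2\xi + H$, the intersection numbers $2,5,10$ are right, and the Gram matrix reduces to $\sqmat{2}{1}{1}{-2}$; this agrees with the lattice $M_{\Deltav_0} = \Pic \Yv_0$ that the paper obtains via Moishezon's theorem. The genuine gap is step 3: the ``Dolgachev--Nikulin mirror correspondence'' for a Batyrev mirror pair, i.e.\ the implication from $\Pic(\Yv)$ to $T_0 = U \bot \Pic(\Yv)$, is not a theorem — it is precisely \pref{cj:Dolgachev}, and both this paper and the literature treat it as an open conjecture. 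The paper's logic runs in the opposite direction: \pref{th:Nagano0} is not proved in the paper at all but quoted from \cite[Theorem 3.1]{MR2962398}, the mirror lattice $\Pic \Yv_0$ is computed independently by Moishezon plus toric intersection theory, and only then does the paper conclude ``so that \pref{cj:Dolgachev} holds in this case.'' Using the conjecture as an input to prove \pref{th:Nagano0} would make that verification circular. Moreover the gap is essential rather than cosmetic: without step 3, your remaining ingredients (dominance of the period map, Nikulin's uniqueness of primitive embeddings — which, by the way, you apply correctly, since the discriminant group of $T_0$ is $\bZ/5$ and the relevant inequalities hold) determine at most the rank $\rho = 20 - 2 = 18$, and give no information about the isometry classes of $M_0$ and $T_0$, which is the actual content of the theorem. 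A self-contained proof along your lines must replace step 3 by a direct determination of the lattice on the $A_0$-side: either the rank-$18$ toric intersection computation you set out to avoid, or an explicit elliptic fibration on the general member of $\fY_{A_0}$ (two fibers of type $I\!I^*$ accounting for the $E_8 \bot E_8$ summand), after which your Nikulin step 4 recovers $T_0$ from $M_0$.

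Two further inaccuracies, both repairable. First, your Noether--Lefschetz claim is stated too strongly: for a toric K3 hypersurface the Picard lattice of the very general member is generated by the toric restrictions \emph{together with} the classes of the individual components of the curves $Y \cap D_i$, which can be reducible and whose components need not lie in the image of $\Pic(X')$; equality with the toric part requires the vanishing of the correction term in Batyrev's formula, namely that no edge of $\Deltav_0$ with interior lattice points has dual edge of $\Delta_0$ with interior lattice points. This does hold here, because the only lattice points of $\Delta_0$ are its five vertices and the origin, so every edge of $\Delta_0$ is lattice-point free in its interior — but it is exactly the place where the stated rank could fail, so it must be checked. Second, a bookkeeping slip: the fan of an MPCP resolution of $X_{\Delta_0}$ refines the normal fan of $\Delta_0$, so its rays pass through the boundary lattice points of $\Deltav_0$, not through $A_0 \setminus \{0\}$; the fan whose rays pass through $A_0 \setminus \{0\}$ is that of the mirror $\Xv$ itself.
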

The moduli space of $M_0$-polarized K3 surfaces can be identified
with a Hilbert modular surface as follows.
Let $\cO$ be the ring of integers of the real quadratic field $\bQ(\sqrt{5})$.
The Hilbert modular group $\PSL_2(\cO)$ acts on the product
$\bH \times \bH$ of the upper half planes by
\begin{align}
\begin{pmatrix}
 \alpha & \beta \\
 \gamma & \delta
\end{pmatrix}
 \colon
 (z_1, z_2) \mapsto
  \lb
   \frac{\alpha z_1+\beta}{\gamma z_1+\delta},
   \frac{\alpha' z_1+\beta'}{\gamma' z_1+\delta'}
  \rb,
\end{align}
where
$(-)'$
is the conjugation in $\bQ(\sqrt{5})$.
One has
\begin{align}
 \begin{pmatrix}
 2 & 1 \\
 1 & -2
\end{pmatrix}
 &= W U W^T, \quad
 W =
\begin{pmatrix}
 1 & 1 \\
 -\varepsilon^{-1} & \varepsilon
\end{pmatrix}, \quad
 \varepsilon = (1+\sqrt{5})/2,
\end{align}
so that
\begin{align}
 \bH \times \bH \to \cD^+, \quad
 (z_1, z_2) \mapsto \lb I_2 \oplus (W^T)^{-1} \rb
\begin{pmatrix}
 z_1 z_2 \\
 -1 \\
 z_1 \\
 z_2
\end{pmatrix}
\end{align}
gives a biholomorphic map.
The orthogonal group $\PO^+(T_0)$ is generated
by the Hilbert modular group
$\PSL_2(\cO)$ and the permutation
\begin{align}
 \tau : \bH \times \bH \to \bH \times \bH, \quad
  (z_1, z_2) \mapsto (z_2, z_1)
\end{align}
under this identification.
The symmetric Hilbert modular surface
$
 \bH \times \bH / \la \PSL_2(\cO), \tau \ra
$
is studied in detail
by Hirzebruch \cite{MR0480355}
(cf. also \cite{MR1025329}).
The graded ring $\fM = \bigoplus_{n=0}^\infty \fM_n$
of symmetric Hilbert modular forms
is generated by forms $\fA$, $\fB$, $\fC$, $\fD$
of weights 2, 6, 10, 15
with one relation of degree 30;
\begin{gather}
 \fM = \bC[\fA,\fB,\fC,\fD] / \lb
 144 \fD^2 - \bsDelta(\fA, \fB, \fC) \rb, \\
 \bsDelta(\fA, \fB, \fC) =
  -1728 \fB^5+720 \fA \fB^3 \fC-80 \fA^2 \fB \fC^2
  +64 \fA^3(5 \fB^2-\fA \fC)^2+\fC^3.
\end{gather}
The stack $\cMbar = \bProj \fM=[(\Spec \fM \setminus \bszero)/\bCx]$
is a hypersurface of degree 30
in the weighted projective space $\bP(2,6,10,15)$.
It is obtained from the weighted projective plane
$\bP(1,3,5) = \bProj \bC[\fA,\fB,\fC]$
by the root construction
along the divisor defined by $\bsDelta$.
The cusp consists of one point
$[\fA:\fB:\fC]=[1:0:0]$.

\begin{comment}
\begin{shaded}
The family
\begin{align} \label{eq:Weierstrass_0}
 z^2 = x^3 - 4(4y^3-5 \fA y^2) x^2 + 20 \fB x y^3 + \fC y^4
\end{align}
gives a birational model
for the family of lattice-polarized K3 surfaces
\cite{MR3160602}.
Two members of this family is isomorphic
as a lattice polarized K3 surface
if and only if there is $\alpha \in \bCx$
such that
$(\fA', \fB', \fC') = (\alpha \fA, \alpha^3 \fB, \alpha^5 \fC)$.
\end{shaded}
\end{comment}

\subsection{The period map}

The space of Laurent polynomials is given by
\begin{align} \label{eq:Laurent0}
 \bC^{A_0} =
  \lc W = a_0+a_1x+a_2y+a_3z+\frac{a_4}{z}+\frac{a_5}{xyz^2} \rc,
\end{align}
and the discriminant is given by
\begin{align}
\begin{split}
 \bsDelta = a_4^2 a_0^6 &+4 a_1 a_2 a_5 a_0^5-12 a_3 a_4^3
   a_0^4-50 a_1 a_2 a_3 a_4 a_5 a_0^3 \\
   &+ 48 a_3^2 a_4^4 a_0^2
   +1000 a_1 a_2 a_3^2 a_4^2 a_5
   a_0-64 a_3^3 a_4^5+3125 a_1^2 a_2^2 a_3^2
   a_5^2.
\end{split}
\end{align}
\begin{comment}
\begin{shaded}
or
\begin{align} \label{eq:disc0-1}
 64 \lambda ^5-48 \lambda ^4+12 \lambda ^3-1000
   \lambda ^2 \mu -\lambda ^2+50 \lambda  \mu
   -3125 \mu ^2-4 \mu.
\end{align}
One can check by direct calculation
that this coincides with
\begin{align}
 1728 X^5 - 720 X^3 Y Z + 80 X Y^2 Z^2 - 64(5 X^2-Y Z)^2 Z^3 - Y^3
\end{align}
up to a multiplicative factor.
By substituting $\mu=0$ into \eqref{eq:disc0-1},
one obtains
\begin{align}
 64 \lambda ^5-48 \lambda ^4+12 \lambda ^3-\lambda ^2
 = \lambda^2(4 \lambda-1)^3.
\end{align}
This shows that the point $(\lambda, \mu) = (1/4,0)$ is on the discriminant.
We will write
\begin{align}
 \lambda &= \lambda, \\
 \mu &= \mu.
\end{align}
\end{shaded}
\end{comment}
%
The dense torus $\bL^\vee_{\bCx}$
of the secondary stack $\XFzero$ can be written as
$\Spec \bC[\lambda^{\pm 1}, \mu^{\pm 1}]$ where
\begin{align}
 \lambda &= \frac{a_2 a_4}{a_0^2}, \\
 \mu &= \frac{a_1 a_2 a_3^2 a_5}{a_0^5}.
\end{align}
The period map
\begin{align}
 \XFzero \dashrightarrow \bP(1,3,5)
\end{align}
for the family $\cY_0$ is computed
in \cite[Theorem 6.2]{MR2962398} as
\begin{align}
 (\lambda, \mu)
  \mapsto \ld 1 : \frac{25 \mu}{2 (\lambda-1/4)^3} :
   - \frac{3125 \mu^2}{(\lambda-1/4)^5} \rd.
\end{align}
Since $\XFzero$ is a weighted blow-up
of $\bP(1,2,5)$,
the period map induces a rational map
\begin{align}
 \bP(1,2,5) \dashrightarrow \bP(1,3,5),
\end{align}
which is given by
\begin{align}
 [\nu:\lambda:\mu]
  \mapsto \ld \lambda-\nu^2/4:
   \frac{25}{2} \nu \mu:
   -3125\mu^2 \rd.
\end{align}
This map is not defined at $[\nu:\lambda:\mu] = [1:1/4:0]$.
The weighted blow-up of weight $(1,3)$
along the ideal $(\lambda-\nu^2/4, \mu)$
eliminates the indeterminacy,
and the resulting morphism is a weighted blow-down
of weight $(1,2)$
which contracts the strict transform of the divisor
$\{ \mu=0 \} \subset \bP(1,2,5)$
to the point $[1:0:0] \in \bP(1,3,5)$.
Since the secondary stack $\XFzero$ is obtained from $\bP(1,2,5)$
by a weighted blow-up of weight $(1,2)$ at one point,
the indeterminacy of the rational map
$\Pi \colon \XFzero \dashrightarrow \bP(1,3,5)$
can also be eliminated by a weighted blow-up
$\XtildeFzero \to \XFzero$ of weight $(1,3)$,
and the resulting morphism
$\Pitilde \colon \XtildeFzero \to \bP(1,3,5)$
is an iteration of weighted blow-ups
of weight $(1,2)$.
A schematic picture of the blow-ups
of the parameter space
is shown in \pref{fg:fans}.

\begin{figure}[ht]
\centering
\input{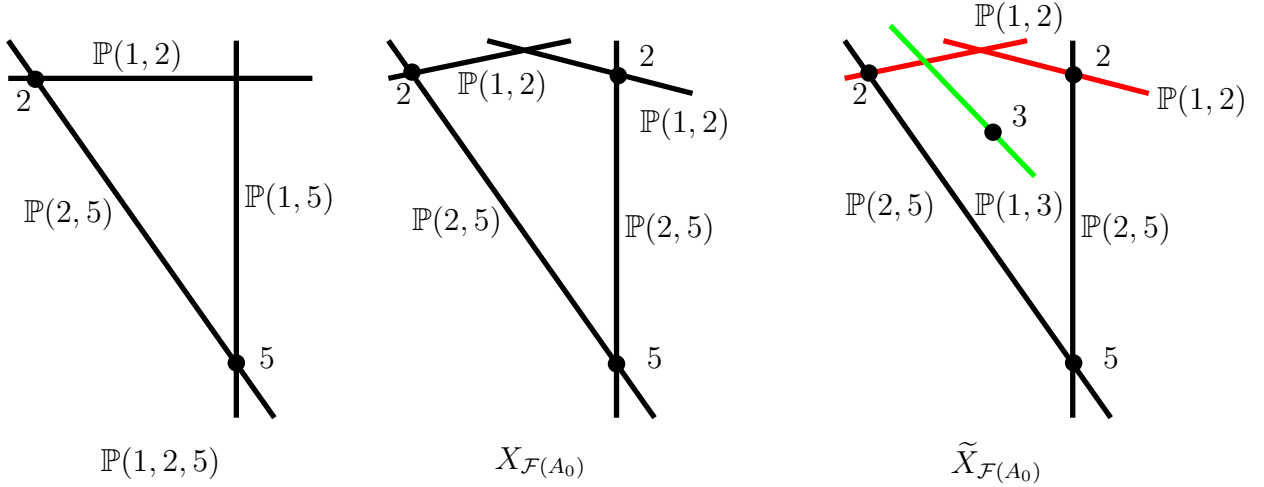}
\caption{The blow-ups of parameter spaces}
\label{fg:fans}
\end{figure}

\subsection{Mirror symmetry and monodromy}

The polar dual polytope of $\Delta_0$ is given by
\begin{align*}
 \Deltav_0 &= \Conv \lc
  (0,-1,1),(4,-1,-1),(-1,-1,-1),(-1,-1,1),((-1,4,-1),(-1,0,1)
 \rc.
\end{align*}
The associated toric variety $\Xv$
is a toric Fano manifold,
which is a $\bP^1$-bundle
$\bP(\cO_{\bP^2} \oplus \cO_{\bP^2}(2))$ over $\bP^2$.
The following theorem of Mo{\u\i}{\v{s}}ezon shows that
the Picard lattice of a very general member $\Yv$
of the family $\fYv$ is generated
by the restrictions of the toric divisors
of the ambient space:

\begin{theorem}[{\cite[Theorem 7.5]{MR0213351}}]
Let $V$ be a smooth projective 3-fold and
$\iota : E \hookrightarrow V$ be a very general hyperplane section.
Then the map $\iota^* : \Pic(E) \to \Pic(V)$ is surjective
if and only if one of the following holds:
\begin{enumerate}
 \item
The Betti numbers satisfy
$b_2(V) = b_2(E)$.
 \item
The Hodge numbers satisfy
$h^{2,0}(E) > h^{2,0}(V)$.
\end{enumerate}
\end{theorem}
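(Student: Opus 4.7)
(Interpreting the map as the restriction $\iota^{*}\colon \Pic(V)\to\Pic(E)$.) The strategy is to reduce the question to a statement about vanishing cohomology via the Lefschetz decomposition, and then invoke a Noether--Lefschetz-type density argument. By the Lefschetz hyperplane theorem, $\iota^{*}\colon H^{2}(V,\bQ)\to H^{2}(E,\bQ)$ is injective and respects the Hodge structures. Orthogonality under the intersection pairing on $E$ yields a splitting of rational polarized Hodge structures
\[
 H^{2}(E,\bQ)=\iota^{*}H^{2}(V,\bQ)\,\oplus\, V(E),
\]
where $V(E)$ is the vanishing cohomology. Modulo torsion, $\iota^{*}\colon\Pic(V)\to\Pic(E)$ is surjective if and only if $V(E)^{1,1}\cap V(E)_{\bZ}=0$.

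With this reduction the easy halves are immediate. If $b_{2}(V)=b_{2}(E)$, then $V(E)=0$ and there is nothing to check. Conversely, suppose neither (1) nor (2) holds. Then $V(E)\neq 0$ while $h^{2,0}(V(E))=h^{2,0}(E)-h^{2,0}(V)=0$, so the Hodge structure $V(E)$ is concentrated in bidegree $(1,1)$; every non-zero class in $V(E)_{\bZ}$ then provides a Picard class on $E$ that is not in the image of $\iota^{*}$.

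The substantive step is the ``if'' direction under condition (2). Consider the universal family $\mathcal{E}\to U$ of smooth hyperplane sections and the sub-local-system of $R^{2}f_{*}\bQ$ cut out by the vanishing cohomology. Condition (2) means that the Hodge bundle $\mathcal{F}^{2}$ on $V(\mathcal{E})$ is non-zero, so there is a non-trivial $(2,0)$-part to vary. The ``Noether--Lefschetz locus''
\[
 \mathrm{NL}=\{t\in U\mid V(E_{t})^{1,1}\cap V(E_{t})_{\bZ}\neq 0\}
\]
is, a priori, a countable union of closed analytic subvarieties of $U$, and the task is to prove that every component is a proper subvariety. This follows once one verifies that the infinitesimal variation of Hodge structure on $V(E_{t})$ is non-trivial along every Kodaira--Spencer direction in the linear system, an assertion that reduces to a residue/Jacobian-ring computation of the derivative of the period map restricted to the vanishing part. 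Given non-triviality, each component of $\mathrm{NL}$ is proper, and a very general $t\in U$ avoids $\mathrm{NL}$, which is precisely the surjectivity statement.

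\textbf{Main obstacle.} The technical heart is the non-triviality of the infinitesimal variation of Hodge structure on $V(E_{t})$: one must show that, in the presence of a non-zero vanishing $(2,0)$-class, some Kodaira--Spencer class genuinely moves $\mathcal{F}^{2}V(E)$ inside $V(E)_{\bC}$. For $V=\bP^{n}$ this is a classical computation of Griffiths; for general smooth projective $V$ one either appeals to Moishezon's original analysis or to the infinitesimal Noether--Lefschetz machinery of Green and Voisin, using symmetrizer lemmas together with the positivity of the hyperplane line bundle. The remaining points---taking care of torsion in Picard groups and extracting a ``very general'' hyperplane section from a countable union of proper subvarieties---are routine.
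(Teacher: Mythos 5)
First, a point of comparison: the paper does not prove this statement at all. It is quoted (with a misprinted arrow --- $\iota^*$ is of course the restriction map $\Pic(V)\to\Pic(E)$, as you correctly reinterpret) from Moishezon's paper and used as a black box. So your proposal can only be judged against the classical argument, not against anything in the paper itself.

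Your reduction and both easy implications are fine: the orthogonal splitting $H^2(E,\bQ)=\iota^*H^2(V,\bQ)\oplus V(E)$ exists because the Lefschetz pairing is nondegenerate on $\iota^*H^2(V,\bQ)$; condition (1) gives $V(E)=0$; and if neither (1) nor (2) holds, then (using $h^{2,0}(E)\ge h^{2,0}(V)$, which follows from injectivity of $\iota^*$) the nonzero rational Hodge structure $V(E)$ is purely of type $(1,1)$, so it contains nonzero integral $(1,1)$ classes, which are algebraic by Lefschetz $(1,1)$ and rationally independent of $\iota^*\Pic(V)$.

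The gap is in the substantive implication, that (2) forces surjectivity for very general $E$. The criterion you propose to verify --- that the infinitesimal variation of Hodge structure on $V(E_t)$ is ``non-trivial along every Kodaira--Spencer direction'' --- is quantified over the wrong variable and would not suffice even if true: to show a component $\mathrm{NL}_\lambda$ is proper you need that the particular class $\lambda$ is not orthogonal to $\nabla_v F^2V(E_t)$ for every $v$, i.e.\ a statement quantified over all rational $(1,1)$ classes $\lambda$ in the vanishing part, and nontriviality of $\overline{\nabla}_v$ for each $v$ does not preclude a single $\lambda$ annihilating the whole image of $\nabla F^2$. More seriously, no infinitesimal statement of this kind is available at the stated level of generality: the Green--Voisin Jacobian-ring machinery applies to hypersurfaces of sufficiently high degree in $\bP^n$, or to sufficiently ample linear systems, whereas here $\cO_V(1)$ is merely very ample and $V$ is an arbitrary smooth projective 3-fold. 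The argument that works in exactly this generality is global, not infinitesimal: if some component of the Hodge locus were all of $U$, then analytic continuation of the equation $\langle \lambda_t, F^2\rangle = 0$ shows that the multivalued flat transport of $\lambda$ is of type $(1,1)$ at every point, hence so are all of its monodromy translates; their $\bQ$-span is a nonzero monodromy-invariant subspace of $V(E_{t_0})_\bQ$, which by the irreducibility of the monodromy representation on the vanishing cohomology (the classical theorem of Lefschetz pencil theory) must equal $V(E_{t_0})_\bQ$; this puts $V(E_{t_0})_\bC$ inside $H^{1,1}(E_{t_0})$, contradicting (2). Your closing paragraph gestures at ``Moishezon's original analysis'' as an alternative, but never identifies this monodromy irreducibility, which is the actual key lemma; as written, the proof is incomplete precisely at its technical heart.
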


It follows that
\begin{align}
 M_{\Delta_0} &= \Pic Y_0
  = E_8 \bot E_8 \bot \sqmat{2}{1}{1}{-2}, \\
 M_{\Delta_0}^\bot &= U \bot \sqmat{2}{1}{1}{-2}, \\
 M_{\Deltav_0}
  &= \Pic \Yv_0
  = \sqmat{2}{1}{1}{-2}, \\
 M_{\Deltav_0}^\bot
  &= U \bot E_8 \bot E_8 \bot \sqmat{2}{1}{1}{-2},
\end{align}
so that \pref{cj:Dolgachev} holds in this case.

The nef cone of $\Xv$ is generated by
$2 D_1+D_4$ and
$D_1$,
where $D_1$ and $D_4$ are toric divisors
associated with one-dimensional cones
generated by $v_1$ and $v_4$.
Let $E_1$ and $E_2$ be the restrictions of $2 D_1+D_4$
and $D_1$ to $\Yv$ respectively.
The corresponding coordinates $(q_1, q_2)$
of $H^2_\amb(\Yv, \bCx)$
near the large radius limit
can be identified with the coordinates
$(\lambda, \mu)$
of the dense torus
$\bL^\vee_{\bCx} \subset \XFzero$
of the secondary stack by
\begin{align}
 q_1 &= \lambda, \\
 q_2 &= \frac{\mu}{\lambda^2}.
\end{align}
The algebraic lattice of $\Yv$ can be written as
$
 \cN(\Yv) \cong U \bot N,
$
where the hyperbolic plane
$U = \bZ e \oplus \bZ f$
is generated by
$
 e = [\cO_p]
$
and
$
 f = - [\cO_\Yv]-[\cO_{E_1}]+3[\cO_{E_2}].
$
The orthogonal complement
\begin{align}
 N \cong
\begin{pmatrix}
 10 & 5 \\
 5 & 2
\end{pmatrix}
\end{align}
is generated by
$
 \lc e_1 = [\cO_{E_1}], \ 
 e_2 = [\cO_{E_2}] \rc
$
and isometric to the N\'{e}ron-Severi lattice
$\NS(\Yv)$.
The orthogonal group of $N$ is generated by two elements;
\begin{align}
 O^+(N) = \la g_1, g_2 \ra,
\quad
 g_1 = 
\begin{pmatrix}
 4 & 1 \\
 -5 & -1
\end{pmatrix}, \ 
 g_2 = 
\begin{pmatrix}
 1 & 1 \\
 0 & -1
\end{pmatrix} .
\end{align}
The monodromies $T_1$ and $T_2$ for
$
 (q_1, q_2) \mapsto (e^{2 \pi \sqrt{-1}} q_1, q_2)
$
and
$
 (q_1, q_2) \mapsto (q_1, e^{2 \pi \sqrt{-1}} q_2)
$
are given by
$
 \cO(-E_1) \otimes (-)
$
and
$
 \cO(-E_2) \otimes (-)
$
respectively.
A direct calculation shows
\begin{align}
 T_1 = \varphi_{e, e_1}, \\
 T_2 = \varphi_{e, e_2},
\end{align}
where $\varphi_{e, \bullet} \colon N \hookrightarrow O(\cN(\Yv))$
is the embedding in \eqref{eq:embedding}.

\subsection{Proof of \pref{th:main0}}

Let $\Sigma$ be the fan in $N_\bR$
whose one-dimensional cones are generated by
the orbit of $e_1$ and $e_2$
under the action of $O^+(N)$
shown in \pref{fg:toroidal_fan}.
The associated toric variety $X_\Sigma$
shown in \pref{fg:toroidal_translation}
has a natural action of $O^+(N)$,
and the toroidal compactification
is obtained by replacing a neighborhood of the cusp
with the neighborhood of the origin in $X_\Sigma$.
The quotient $X_\Sigma/O^+(N)$
is obtained by first taking quotient by the infinite cyclic subgroup
$C_1 \lhd O^+(N)$ generated by $g_1$,
and then by the cyclic group $C_2 = O^+(N)/C_1$ of order two
generated by $[g_2]$.
The action of $g_1$ on $\Sigma$
is a `translation'
sending a one-dimensional cone
to the one which is next next to it.
The quotient of $X_\Sigma$ by this action
gives a configuration of a $(-1)$-curve and a $(-5)$-curve
shown in \pref{fg:toroidal_translation2}.
The quotient group $C_2$ acts on $X_\Sigma/C_1$
by flipping along the horizontal line in \pref{fg:toroidal_flip1},
and one obtains a chain of $\bP(1,2)$
shown in \pref{fg:toroidal_flip2}
as the quotient.
The normal bundles of these curves are
$\cO_{\bP(1,2)}(-1)$ and $\cO_{\bP(1,2)}(-5)$,
which are precisely the ones
that one obtains by performing iterated weighted blow-up
of weight $(1,2)$.
By contracting these curves,
one obtains the cusp,
which is the smooth point $[1:0:0]$
in $\bP(1,3,5)$.

\begin{figure}[ht]
\begin{minipage}{.35 \linewidth}
\centering
\input{toroidal_fan.pst}
\caption{The fan $\Sigma$ in $N_\bR$}
\label{fg:toroidal_fan}
\end{minipage}
\begin{minipage}{.33 \linewidth}
\centering
\input{toroidal_translation.pst}
\caption{The variety $X_\Sigma$}
\label{fg:toroidal_translation}
\end{minipage}
\begin{minipage}{.3 \linewidth}
\centering
\input{toroidal_translation2.pst}
\caption{The variety $X_\Sigma/C_1$}
\label{fg:toroidal_translation2}
\end{minipage}
\begin{minipage}{.45 \linewidth}
\centering
\ \\
\ \hspace{15mm}\\
\input{toroidal_flip1.pst}
\caption{The action of $C_2$}
\label{fg:toroidal_flip1}
\end{minipage}
\begin{minipage}{.45 \linewidth}
\centering
\ \\
\ \hspace{15mm}\\
\input{toroidal_flip2.pst}
\ \\
\ \hspace{25mm}\\
\caption{The variety $X_\Sigma/O^+(N)$}
\label{fg:toroidal_flip2}
\end{minipage}\end{figure}

\section{The family associated with $A_1$}
 \label{sc:A1}

\subsection{The secondary fan}

\begin{figure}[ht]
\centering
\input{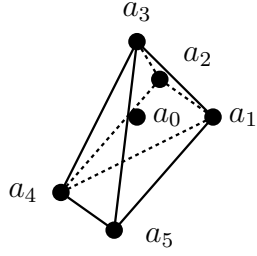}
\caption{The polytope $P_1$}
\label{fg:P1}
\end{figure}

\begin{figure}[ht]
\begin{minipage}{.45 \linewidth}
\centering
\input{sfan_1.pst}
\caption{The secondary fan $\cF(A_1)$}
\label{fg:sfan_1}
\end{minipage}
\begin{minipage}{.45 \linewidth}
\centering
\input{sfan_1_2.pst}
\caption{Maximal cones of $\cF(A_1)$}
\label{fg:sfan_1_2}
\end{minipage}
\end{figure}


Let $\Delta_1$ be the reflexive polytope
with 5 vertices in \pref{fg:P1};
\begin{align}
 \beta_1 &=
\begin{pmatrix}
 v_1 & v_2 & v_3 & v_4 & v_5
\end{pmatrix}
 =
\begin{pmatrix}
 1 & 0 & 0 & -1 & 0 \\
 0 & 1 & 0 & 0 & -1 \\
 0 & 0 & 1 & -1 & -1 \\
\end{pmatrix}.
\end{align}
The homomorphism $\bZ^{n+r} \to \bL^\vee$
in the divisor sequence
\eqref{eq:divisor_sequence}
is represented by the matrix
\begin{align*}
\begin{pmatrix}
 -3 & 1 & 0 & 1 & 1 & 0 \\
 -3 & 0 & 1 & 1 & 0 & 1
\end{pmatrix}.
\end{align*}
The secondary fan $\cF(A_1)$ is given in \pref{fg:sfan_1},
and the secondary stack $\XFone$ is $\bP(1,3,3)$ blown-up
at one point.
%
\begin{comment}
\begin{shaded}
The triangulation associated with
$(0, 0, 0, 0, \lambda_1, \lambda_2)$.
\end{shaded}
\end{comment}
%
\pref{fg:sfan_1_2} shows maximal cones
of the secondary fan $\cF(A_1)$.
The cone $I$ corresponds to the triangulation
\begin{align}
\begin{split}
 \{ a_0,a_2,a_3,a_4 \} 
  \cup \{ a_0,a_1,a_2,a_4 \}
  \cup \{ a_0,a_1,a_2,a_3 \} \qquad\\
 \cup \{ a_0,a_3,a_4,a_5 \}
  \cup \{ a_0,a_1,a_4,a_5 \}
  \cup \{ a_0,a_1,a_3,a_5 \},
\end{split}
\end{align}
the cone $I\!I$ corresponds to the triangulation
\begin{align}
 \{ a_1,a_2,a_3,a_4 \} \cup \{ a_1,a_3,a_4,a_5 \},
\end{align}
the cone $I\!I\!I$ corresponds to the triangulation
\begin{align}
 \{ a_1,a_2,a_3,a_5 \} \cup \{ a_2,a_3,a_4,a_5 \},
\end{align}
and the cone $IV$ corresponds to the triangulation
\begin{align}
\begin{split}
 \{ a_0,a_1,a_3,a_5 \} \cup \{ a_0,a_1,a_2,a_5 \}
  \cup \{ a_0,a_1,a_2,a_3 \} \qquad \\
  \cup \{ a_0,a_3,a_4,a_5 \} \cup \{ a_0,a_2,a_4,a_5 \}
  \cup \{ a_0,a_2,a_3,a_4 \}.
\end{split}
\end{align}

\subsection{The period domain}

By \cite[Theorem 3.1]{MR2962398},
the transcendental lattice $T_1$ of a very general member of the family
$\fY_{A_1} \to \XFone$
is isometric to $U \bot U(3)$.
The period domain for this family is described as follows:
Let
$
 R=M_2(\bZ)\cong \bZ^4
$
be a free abelian group of rank 4,
equipped with the symmetric bilinear form
\begin{align}
 \la v, w \ra=-\det(v+w)+\det(v)+\det(w).
\end{align}
Then $R$ is isometric to $U \bot U$, and
the corresponding domain is given by
\begin{align}
 \cD^+=\lc
  Z=\begin{pmatrix}
   \tau_1 \tau_2 & \tau_1 \\
   \tau_2 & 1
  \end{pmatrix}
  \relmid \tau_i \in \bH
 \rc \cong \bH \times \bH.
\end{align}
Indeed, we have
\begin{equation}
 \langle Z,Z \rangle=0, \quad
 \langle Z,\overline{Z} \rangle
  =-(\tau_1-\overline{\tau}_1) (\tau_2-\overline{\tau}_2) > 0.
\end{equation}
The group $\SL_2(\bZ)\times\SL_2(\bZ)$ acts
on $R$ by
\begin{equation}
 (A,B)\cdot v=A v B^{T}, \quad (A,B)\in \SL_2(\bZ)\times\SL_2(\bZ).
\end{equation}
This action induces an action on $\mathcal{D}$ defined by
\begin{equation}
 Z'=(A,B)\cdot Z, \quad
 AZB^{T} \sim Z', \quad
 (A,B)\in \SL_2(\bZ)\times\SL_2(\bZ),
\end{equation}
 where $Z\sim Z'$ if and only if $Z=\lambda Z'$
 for some $\lambda\in \bC^\times$.
For $A=\begin{pmatrix} a & b \\ c & d \end{pmatrix}\in \SL_2(\bZ)$,
 we have
\begin{equation}
 AZ
  =\begin{pmatrix}
    (a \tau_1 + b) \tau_2 & a \tau_1 + b \\
    (c \tau_1 + d) \tau_2 & c \tau_1 + d
   \end{pmatrix}
  \sim
   \begin{pmatrix}
    (A\cdot \tau_1) \tau_2 & A\cdot \tau_1 \\
    \tau_2 & 1
   \end{pmatrix}.
\end{equation}
Similarly, for
 $B \in\SL_2(\bZ)$,
we have
\begin{equation}
 ZB^{T}\sim
 \begin{pmatrix}
  \tau_1 (B\cdot \tau_2) & \tau_1 \\
  B\cdot \tau_2 & 1
 \end{pmatrix}. 
\end{equation}
Hence the identification $Z=(\tau_1,\tau_2)$
 is compatible with the actions of $\SL_2(\bZ)\times \SL_2(\bZ)$;
\begin{equation}
 (A,B)\cdot Z=(A\cdot \tau_1,B \cdot \tau_2).
\end{equation}
We define an involution $\rho$ of $R$ by $\rho(v)=v^T$
 and extend it linearly.
Then we have $(A,B)\rho=\rho(B,A)$ and the induced action of $\rho$
 on $\mathcal{D}$ is given by
 $\rho\cdot (\tau_1,\tau_2)=(\tau_2,\tau_1)$.

Now consider the sublattice $L$ of $R$ defined by
\begin{gather}
 L= \lc v=\begin{pmatrix} x & y \\ z & w \end{pmatrix} \in R \relmid
  \langle v,u \rangle \equiv 0 \bmod 3 \rc,
\end{gather}
where
$
 u=\SqMat{1}{0}{0}{0}.
$
Then we have $L \cong U \bot U(3)$.
The subgroup of $\SL_2(\bZ)\times \SL_2(\bZ)$
which preserves $L$ as a set
is given by
$
 \Gamma'=\Gamma_0(3)\times \Gamma_0(3).
$
One has
\begin{align}
 O(L) = \la \Gamma', \sigma \ra \rtimes \la \rho \ra,
\end{align}
where $\sigma \in O(L)$ is defined by
\begin{equation}
 \sigma=(S,S) \colon
 \SqMat{x}{y}{z}{w} \mapsto S \SqMat{x}{y}{z}{w} S^T
  =\SqMat{w/3}{-z}{-y}{3x}, \quad
 S=\frac{1}{\sqrt{3}} \SqMat{}{1}{-3}{}. 
\end{equation}
The discrete group $\Gamma^+$ is given by
\begin{align}
 \Gamma^+
  \cong \Ker \lb O^+(L) \to \Aut(L^\vee/L) \rb
  \cong \Gamma' \ltimes \la \rho \ra,
\end{align}
and the moduli space is given by
\begin{align}
 \cM
  = \cD^+/\Gamma^+
  = X_0'(3) \times X_0'(3) / C_2,
\end{align}
where $C_2 = \la \rho \ra = \bZ/2\bZ$
acts on the product of two copies
of the modular curve $X_0'(3) = \bH / \Gamma_0(3)$
by permutation.
Recall that the modular curve
$X'_0(3) =\bH/\Gamma_0(3)$
has two cusps and one orbifold point of order 3.
It can be compactified to $X_0(3) \cong \bP(1,3)$,
where the position of the cusps and the orbifold point
can be set to $0$, $1$, and $\infty$.
Hence the Baily-Borel-Satake compactification
$\cMbar$ of $\cM$ is obtained from $\bP(1,3,3)$
by the root construction along the image of the diagonal.
\pref{fg:x-plane1_1} shows the product $X_0(3) \times X_0(3)$,
and \pref{fg:q-plane1_1} shows the quotient
$\cMbar_1 = X_0(3) \times X_0(3) / \la \rho \ra$.
The dotted line in \pref{fg:x-plane1_1} is the diagonal,
which goes to the dotted line in \pref{fg:q-plane1_1},
where it has a generic stabilizer of order two.

\begin{figure}[t]
\begin{minipage}{.5 \linewidth}
\centering
\input{x-plane1_1.pst}
\caption{$X_0(3) \times X_0(3)$}
\label{fg:x-plane1_1}
\end{minipage}
\begin{minipage}{.5 \linewidth}
\centering
\input{q-plane1_1.pst}
\caption{$\cMbar_1$}
\label{fg:q-plane1_1}
\end{minipage}
\end{figure}

\subsection{The period map}

The space of Laurent polynomials is given by
\begin{align}
 \bC^{A_1} = \lc a_0 + a_1 x + a_2 y + a_3 z
  + \frac{a_4}{x z} + \frac{a_5}{y z} \rc,
\end{align}
and the discriminant is given by
\begin{align*}
 \bsDelta_1 &= 
a_0^6+54 a_1 a_3 a_4 a_0^3+54
   a_2 a_3 a_5 a_0^3+729 a_1^2
   a_3^2 a_4^2+729 a_2^2 a_3^2
   a_5^2-1458 a_1 a_2 a_3^2
   a_4 a_5.
\end{align*}
The dense torus $\bL^\vee_{\bCx}$
of the secondary stack $\XFone$ can be written as
$\Spec \bC[\lambda^{\pm 1}, \mu^{\pm 1}]$ where
\begin{align}
 \lambda &= \frac{a_1 a_3 a_4}{a_0^3}, \\
 \mu &= \frac{a_2 a_3 a_5}{a_0^3}.
\end{align}
\begin{comment}
\begin{shaded}
The Horn-Kapranov uniformization
\begin{align}
 h : \bP^1 \to \bL \otimes \bCx, \ 
  [\lambda_1:\lambda_2] \mapsto (\lambda, \mu)
\end{align}
of the discriminant $\bsnabla_1$ is given by
\begin{align}
 \lambda &= -\frac{1}{27} \frac{\lambda_1^3}{(\lambda_1+\lambda_2)^2}, \\
 \mu &= -\frac{1}{27} \frac{\lambda_2^3}{(\lambda_1+\lambda_2)^2}.
\end{align}
\begin{align}
 \frac{1}{27}\lambda(\lambda_1,\lambda_2) &=
  (-3\lambda_1-3\lambda_2)^{-3}
  \lambda_1^1 \lambda_2^0
  (\lambda_1+\lambda_2)^1
  \lambda_1^1 \lambda_2^0
   = - \frac{1}{27} \frac{\lambda_1^3}{(\lambda_1+\lambda_2)^2} \\
 \frac{1}{27}\mu(\lambda_1,\lambda_2) &=
  (-3\lambda_1-3\lambda_2)^{-3}
  \lambda_1^0 \lambda_2^1
  (\lambda_1+\lambda_2)^1
  \lambda_1^0 \lambda_2^1
   = - \frac{1}{27} \frac{\lambda_2^3}{(\lambda_1+\lambda_2)^2}
\end{align}
which clearly satisfies
\begin{align}
 1+2(\lambda+\mu)+(\lambda-\mu)^2=0.
\end{align}
\end{shaded}
\end{comment}

The period which is holomorphic around
$\lambda=\mu=0$
is given by
\begin{align}
 \eta_1(\lambda,\mu)
  &= \sum_{n,m=0}^\infty (-1)^{m+n}
   \frac{(3n+3m)!}{(n!)^2 (m!)^2 (n+m)!} \lambda^n \mu^m.
\end{align}
Recall the classical relation
(cf.~e.g.~\cite[(8)]{MR2514458})
\begin{align} \label{eq:Gausstensor}
 F_4 (a,b,c,a+b-c+1; x(1-y), y(1-x))={}_2F_1(a,b,c;x)  {}_2F_1(a,b,a+b-c+1;x) 
\end{align}
between Appell's function
\begin{align}
 F_4(a,b,c_1,c_2;z,w)
  = \sum_{m,n=0}^\infty
   \frac{(a)_{m+n} (b)_{m+n}}{(c_1)_n (c_2)_m n! m! } z^n w^m
\end{align}
and Gauss hypergeometric function
\begin{align}
 {}_2F_1(a,b,c;z)=\sum_{n=0}^\infty \frac{(a)_n (b)_n}{ (c)_n n!} z^n,
\end{align}
where
$
 (\alpha)_n=\alpha (\alpha+1) \cdots (\alpha+n-1)
$
is the Pochhammer symbol.
An elementary manipulation shows
\begin{align}
 \eta_1(\lambda, \mu)
  &= F_4\Big(\frac{1}{3},\frac{2}{3},1,1;-27 \lambda, -27 \mu\Big),
\end{align}
which is equal to
\begin{align}
 {}_2F_1\Big(\frac{1}{3},\frac{2}{3},1;x\Big)
   {}_2F_1\Big(\frac{1}{3},\frac{2}{3},1;x\Big) 
\end{align}
by \eqref{eq:Gausstensor} where
$
 -27 \lambda=x(1-y)
$
and
$
 -27 \mu=y(1-x).
$
Gauss hypergeometric function
\begin{align}
 f_1(x)=\displaystyle {}_2F_1\Big(\frac{1}{3},\frac{2}{3},1;x\Big)
\end{align}
is a solution to Gauss hypergeometric differential equation
\begin{align} \label{Gauss(1/3,2/3,1)}
 {}_2E_1(\frac{1}{3},\frac{2}{3},1):x(1-x) \frac{d^2 u}{d x^2}
  +(1-2x)\frac{du}{dx} -\frac{2}{9} u=0.
\end{align}
This differential equation has regular singularity
at $x=0, 1, \infty$.
By choosing a suitable solution $f_2(x)$
to \eqref{Gauss(1/3,2/3,1)}
which is holomorphic in a neighborhood
of $[0+\varepsilon,1-\varepsilon] \subset \bR$ in $\bC$
for sufficiently small $\varepsilon$,
one obtains a map
\begin{align}
 x\mapsto \frac{f_2(x)}{f_1(x)}=s (x) \in \bH
\end{align}
which is defined on $(0,1) \subset \bR$ and satisfies
$
 s(0)=\sqrt{-1} \infty,
$
$
 s(1)= 0,
$
$
 s(\infty)=\displaystyle \frac{1}{2}+\frac{\sqrt{3}}{6} \sqrt{-1}.
$
This map can be extended
to a multivalued function
from $\bP^1 \setminus \{0, 1, \infty\}$ to $\bH$.
%
%
This multi-valued map $s$ sends $\bH \subset \bP^1$
to a hyperbolic triangle with angles $0$, $0$, and $\dfrac{1}{3} \pi$
in $\bH$.
The monodromy of $s$
along the closed paths $\gamma_0$, $\gamma_1$ and $\gamma_\infty$
going around $x = 0$, $1$ and $\infty$
are given by
\begin{align}\label{eq:taka}
\begin{cases}
 (\gamma_0)_* (s) =s+1, \\[2mm]
 (\gamma_1)_* (s) = \dfrac{s}{-3s +1}, \\[3mm]
 (\gamma_\infty)_* (s) = \dfrac{s-1}{3s -2}.
\end{cases}
\end{align}
The inverse map
$
 \bH \to \bP^1
$
sending $s$ to $x$
is a modular function with respect to $\Gamma_0(3)$.

The period for the family $\fY_1$ has the form
\begin{align}
 (\lambda,\mu) \mapsto
  \eta= [\eta_{11} : \eta_{12}: \eta_{13}: \eta_{14}]
  \in \cD_1=\lc \eta \in \bP(T_1) \relmid (\eta, \eta) = 0, \ 
   (\eta, \overline{\eta}) > 0  \rc
\end{align}
where
$
 T_1=U \oplus U(3).
$
The connected component $\cD_1^+ \subset \cD_1$
can be identified with $\bH \times \bH$ by
\begin{align}
 (z_1,z_2)
  \mapsto [\eta_1:\eta_2 : \eta_ 3: \eta_4]
  = [3z_1z_2 : -1 : z_1 : z_2].
\end{align}
The Gauss-Manin system for $\cY_1$ is
Appell's hypergeometric differential equation of rank 4,
and the periods are given by
\begin{align*}
\eta_{11}= 3 f_2(x) f_2(y),
\quad
\eta_{12}=-  f_1(x) f_1(y),
\quad
\eta_{13}= f_2(x) f_1(y),
\quad
\eta_{14}=f_1(x) f_2(y)
\end{align*}
This period map extends
to the $xy$-plane,
which is a double cover of the $\lambda \mu$ plane;
\begin{align}
 (x,y)
  \mapsto [\eta_{11}:\eta_{12}:\eta_{13}:\eta_{14}]
  = [3s(x)s(y):-1 : s(x): s(y)].
\end{align}
The inverse map
\begin{align}
 \bH \times \bH
  \to \bP^1 \times \bP^1, \quad
 (s_1,s_2) \mapsto (x(s_1), y(s_2))
\end{align}
is given by
\begin{align*}
\begin{cases}
 \lambda (s_1,s_2) =\dfrac{x(s_1)(y(s_2)-1)}{27}, \\[3mm]
 \mu (s_1,s_2)= \dfrac{y(s_1)(x(s_2)-1)}{27}.
\end{cases}
\end{align*}

\subsection{Mirror symmetry and monodromy}

The polar dual polytope is given by
\begin{align*}
 \Deltav_1 &= \Conv \lc
  (2,2,-1),(2,-1,-1),(-1,-1,-1),(-1,2,-1),(-1,-1,2)
 \rc.
\end{align*}
The ambient space $\Xv$ for the mirror family $\cYv$
is the toric weak Fano 3-fold of Picard number 2,
which is obtained as a crepant resolution
of a toric Fano 3-fold of Picard number 1
with an ordinary double point.

The Picard lattice of a very general member of $\cYv$
is generated by the restrictions $E_1$ and $E_2$
of the toric divisors $D_4$ and $D_5$
of the ambient space,
and one has
\begin{align}
 M_{\Delta_1} &= \Pic Y_1
  = E_8 \bot E_8 \bot U(3), \\
 M_{\Delta_1}^\bot &= U \bot U(3), \\
 M_{\Deltav_1}
  &= \Pic \Yv_1
  = U(3), \\
 M_{\Deltav_1}^\bot
  &= U \bot E_8 \bot E_8 \bot U(3),
\end{align}
so that \pref{cj:Dolgachev} holds in this case.

The numerical Grothendieck group $\cN(\Yv)$
is isometric to $U \bot \Pic(\Yv)$, and
the nef cone of $\Xv$ is generated by $D_4+D_5$ and $D_5$.
One can show that the monodromies
around $q_1=\lambda$ and $q_2=\lambda^{-1} \mu$
are given by
$\varphi_{e,e_1+e_2}$ and $\varphi_{e,e_2}$
just as in the case of $A_0$.
For the other crepant resolution,
the nef cone is generated by $D_4$ and $D_4 + D_5$,
and the monodromies are given by
$\varphi_{e,e_1}$ and $\varphi_{e,e_1+e_2}$.
As a result,
the toroidal compactification is given by the fan in $N_\bR$
whose one-dimensional cones are spanned by
$e_1$, $e_1+e_2$, and $e_2$.
This blows up the intersection point
of two components of the cusp
as shown in \pref{fg:q-plane1_2},
and the resulting stack is precisely
the stack $\XtildeFone$ obtained from $\XFone$
by the root construction along the strict transform
of the diagonal in $X_0(3) \times X_0(3)$.

\begin{figure}[t]
\begin{minipage}{\linewidth}
\centering
\input{q-plane1_2.pst}
\caption{$\XFone$}
\label{fg:q-plane1_2}
\end{minipage}
\end{figure}

\bibliographystyle{amsalpha}
\bibliography{bibs.bib}

\def\cprime{$'$} \def\cprime{$'$}
\providecommand{\bysame}{\leavevmode\hbox to3em{\hrulefill}\thinspace}
\providecommand{\MR}{\relax\ifhmode\unskip\space\fi MR }
\providecommand{\MRhref}[2]{%
  \href{http://www.ams.org/mathscinet-getitem?mr=#1}{#2}
}
\providecommand{\href}[2]{#2}
\begin{thebibliography}{BHPVdV04}

\bibitem[AGV08]{Abramovich-Graber-Vistoli}
Dan Abramovich, Tom Graber, and Angelo Vistoli, \emph{Gromov-{W}itten theory of
  {D}eligne-{M}umford stacks}, Amer. J. Math. \textbf{130} (2008), no.~5,
  1337--1398. \MR{2450211 (2009k:14108)}

\bibitem[Bat94]{Batyrev_DPMS}
Victor~V. Batyrev, \emph{Dual polyhedra and mirror symmetry for {C}alabi-{Y}au
  hypersurfaces in toric varieties}, J. Algebraic Geom. \textbf{3} (1994),
  no.~3, 493--535. \MR{MR1269718 (95c:14046)}

\bibitem[BHPVdV04]{Barth-Hulek-Peters-Van_de_Ven}
Wolf~P. Barth, Klaus Hulek, Chris A.~M. Peters, and Antonius Van~de Ven,
  \emph{Compact complex surfaces}, second ed., Ergebnisse der Mathematik und
  ihrer Grenzgebiete. 3. Folge. A Series of Modern Surveys in Mathematics
  [Results in Mathematics and Related Areas. 3rd Series. A Series of Modern
  Surveys in Mathematics], vol.~4, Springer-Verlag, Berlin, 2004. \MR{2030225
  (2004m:14070)}

\bibitem[Cad07]{Cadman_US}
Charles Cadman, \emph{Using stacks to impose tangency conditions on curves},
  Amer. J. Math. \textbf{129} (2007), no.~2, 405--427. \MR{2306040
  (2008g:14016)}

\bibitem[CG07]{Coates-Givental}
Tom Coates and Alexander Givental, \emph{Quantum {R}iemann-{R}och, {L}efschetz
  and {S}erre}, Ann. of Math. (2) \textbf{165} (2007), no.~1, 15--53.
  \MR{2276766 (2007k:14113)}

\bibitem[DKK]{Diemer-Katzarkov-Kerr_SGR}
Colin Diemer, Ludmil Katzarkov, and Gabriel Kerr, \emph{Symplectomorphism group
  relations and degenerations of {L}andau-{G}inzburg models}, arXiv:1204.2233.

\bibitem[Dol96]{Dolgachev_MSK3}
I.~V. Dolgachev, \emph{Mirror symmetry for lattice polarized {$K3$} surfaces},
  J. Math. Sci. \textbf{81} (1996), no.~3, 2599--2630, Algebraic geometry, 4.
  \MR{1420220 (97i:14024)}

\bibitem[Giv96]{Givental_EGWI}
Alexander Givental, \emph{Equivariant {G}romov-{W}itten invariants}, Internat.
  Math. Res. Notices (1996), no.~13, 613--663. \MR{MR1408320 (97e:14015)}

\bibitem[Giv98]{Givental_MTTCI}
\bysame, \emph{A mirror theorem for toric complete intersections}, Topological
  field theory, primitive forms and related topics (Kyoto, 1996), Progr. Math.,
  vol. 160, Birkh\"auser Boston, Boston, MA, 1998, pp.~141--175. \MR{MR1653024
  (2000a:14063)}

\bibitem[GKZ94]{Gelfand-Kapranov-Zelevinsky_DRMD}
I.~M. Gel{\cprime}fand, M.~M. Kapranov, and A.~V. Zelevinsky,
  \emph{Discriminants, resultants, and multidimensional determinants},
  Mathematics: Theory \& Applications, Birkh\"auser Boston Inc., Boston, MA,
  1994. \MR{MR1264417 (95e:14045)}

\bibitem[Hac]{Hacking_CMHA}
Paul Hacking, \emph{Compact moduli of hyperplane arrangements},
  arXiv:math/0310479.

\bibitem[Har13]{MR3062592}
Heinrich Hartmann, \emph{Period- and mirror-maps for the quartic {K}3},
  Manuscripta Math. \textbf{141} (2013), no.~3-4, 391--422. \MR{3062592}

\bibitem[Hir77]{MR0480355}
F.~Hirzebruch, \emph{The ring of {H}ilbert modular forms for real quadratic
  fields in small discriminant}, Modular functions of one variable, {VI}
  ({P}roc. {S}econd {I}nternat. {C}onf., {U}niv. {B}onn, {B}onn, 1976),
  Springer, Berlin, 1977, pp.~287--323. Lecture Notes in Math., Vol. 627.
  \MR{0480355 (58 \#523)}

\bibitem[Hor89]{MR1510591}
J.~Horn, \emph{Ueber die {C}onvergenz der hypergeometrischen {R}eihen zweier
  und dreier {V}er\"anderlichen}, Math. Ann. \textbf{34} (1889), no.~4,
  544--600. \MR{1510591}

\bibitem[Hor05]{Horja_DCAMS}
R.~Paul Horja, \emph{Derived category automorphisms from mirror symmetry}, Duke
  Math. J. \textbf{127} (2005), no.~1, 1--34. \MR{2126495 (2006a:14023)}

\bibitem[Iri09]{Iritani_ISQCMSTO}
Hiroshi Iritani, \emph{An integral structure in quantum cohomology and mirror
  symmetry for toric orbifolds}, Adv. Math. \textbf{222} (2009), no.~3,
  1016--1079. \MR{2553377 (2010j:53182)}

\bibitem[Iri11]{Iritani_QCP}
\bysame, \emph{Quantum cohomology and periods}, Ann. Inst. Fourier (Grenoble)
  \textbf{61} (2011), no.~7, 2909--2958. \MR{3112512}

\bibitem[Kap91]{MR1109634}
M.~M. Kapranov, \emph{A characterization of {$A$}-discriminantal hypersurfaces
  in terms of the logarithmic {G}auss map}, Math. Ann. \textbf{290} (1991),
  no.~2, 277--285. \MR{1109634 (92j:14066)}

\bibitem[KKN89]{MR1025329}
Ryoichi Kobayashi, Keiko Kushibiki, and Isao Naruki, \emph{Polygons and
  {H}ilbert modular groups}, Tohoku Math. J. (2) \textbf{41} (1989), no.~4,
  633--646. \MR{1025329 (91f:11035)}

\bibitem[Kon98]{Kontsevich_ENS98}
Maxim Kontsevich, \emph{Lectures at {ENS} {Paris}, spring 1998}, set of notes
  taken by J. Bellaiche, J.-F. Dat, I. Martin, G. Rachinet and H.
  Randriambololona, 1998.

\bibitem[KSZ91]{MR1119943}
M.~M. Kapranov, B.~Sturmfels, and A.~V. Zelevinsky, \emph{Quotients of toric
  varieties}, Math. Ann. \textbf{290} (1991), no.~4, 643--655. \MR{1119943
  (92g:14050)}

\bibitem[KSZ92]{MR1174606}
\bysame, \emph{Chow polytopes and general resultants}, Duke Math. J.
  \textbf{67} (1992), no.~1, 189--218. \MR{1174606 (93e:14062)}

\bibitem[KU09]{MR2465224}
Kazuya Kato and Sampei Usui, \emph{Classifying spaces of degenerating polarized
  {H}odge structures}, Annals of Mathematics Studies, vol. 169, Princeton
  University Press, Princeton, NJ, 2009. \MR{2465224 (2009m:14012)}

\bibitem[Laf03]{MR1976905}
L.~Lafforgue, \emph{Chirurgie des grassmanniennes}, CRM Monograph Series,
  vol.~19, American Mathematical Society, Providence, RI, 2003. \MR{1976905
  (2004k:14085)}

\bibitem[Mo{\u\i}67]{MR0213351}
B.~G. Mo{\u\i}{\v{s}}ezon, \emph{Algebraic homology classes on algebraic
  varieties}, Izv. Akad. Nauk SSSR Ser. Mat. \textbf{31} (1967), 225--268.
  \MR{0213351 (35 \#4215)}

\bibitem[Nag12]{MR2962398}
Atsuhira Nagano, \emph{Period differential equations for the families of {$K3$}
  surfaces with two parameters derived from the reflexive polytopes}, Kyushu J.
  Math. \textbf{66} (2012), no.~1, 193--244. \MR{2962398}

\bibitem[NS95]{Nagura-Sugiyama}
Masaru Nagura and Katsuyuki Sugiyama, \emph{Mirror symmetry of the {$K3$}
  surface}, Internat. J. Modern Phys. A \textbf{10} (1995), no.~2, 233--252.
  \MR{1308589 (96c:14030)}

\bibitem[NS01]{Narumiya-Shiga}
Norihiko Narumiya and Hironori Shiga, \emph{The mirror map for a family of
  {$K3$} surfaces induced from the simplest 3-dimensional reflexive polytope},
  Proceedings on {M}oonshine and related topics ({M}ontr\'eal, {QC}, 1999)
  (Providence, RI), CRM Proc. Lecture Notes, vol.~30, Amer. Math. Soc., 2001,
  pp.~139--161. \MR{1877764 (2002m:14030)}

\bibitem[Ols04]{MR2097359}
Martin~C. Olsson, \emph{Semistable degenerations and period spaces for
  polarized {$K3$} surfaces}, Duke Math. J. \textbf{125} (2004), no.~1,
  121--203. \MR{2097359 (2005j:14056)}

\bibitem[Vid09]{MR2514458}
Raimundas Vid{\=u}nas, \emph{Specialization of {A}ppell's functions to
  univariate hypergeometric functions}, J. Math. Anal. Appl. \textbf{355}
  (2009), no.~1, 145--163. \MR{2514458 (2010c:33034)}

\end{thebibliography}

\noindent
Kenji Hashimoto

School of Mathematics,
Korea Institute for Advanced Study,
85 Hoegiro,
Dongdaemun-gu,
Seoul,
130-722,
Korea

{\em e-mail address}\ : \ hashimoto@kias.re.kr

\ \vspace{-5mm} \\

%
%
%

\noindent
Atsuhira Nagano

c.o. Professor Kimio Ueno,
Department of Mathematics,
Waseda University,
Okubo 3-4-1,
Shinjuku-ku
Tokyo,
169-8555,
Japan

{\em e-mail address}\ : \ ornithology@akane.waseda.jp

\ \vspace{-5mm} \\

\noindent
Kazushi Ueda

Department of Mathematics,
Graduate School of Science,
Osaka University,
Machikaneyama 1-1,
Toyonaka,
Osaka,
560-0043,
Japan.

{\em e-mail address}\ : \  kazushi@math.sci.osaka-u.ac.jp

\end{document}